\documentclass[11pt, letterpaper]{amsart}
\usepackage[utf8]{inputenc}
\usepackage{setspace}
\usepackage{mathrsfs}
\usepackage{amssymb}
\usepackage{latexsym}
\usepackage{amsfonts}
\usepackage{amsmath}
\usepackage{eucal}
\usepackage{bm}
\usepackage{bbm}
\usepackage{graphicx}
\usepackage[english]{varioref}
\usepackage[nice]{nicefrac}
\usepackage[all]{xy}
\usepackage{amsthm}
\usepackage{amssymb,amsthm,upref,amscd}

\def\op{\operatorname}

\def\mmod{\kern-1pt\operatorname{-mod}}

\newtheorem{theorem}{Theorem}[section]
\newtheorem{lemma}[theorem]{Lemma}

\newtheorem{remark}[theorem]{Remark}
\newtheorem{proposition}[theorem]{Proposition}
\newtheorem{corollary}[theorem]{Corollary}

\theoremstyle{proposition}

\numberwithin{equation}{section}

\begin{document}

\title[Classification of simple modules of  $SL_2(\bar{\mathbb{F}}_p)$  ]{A note  on the classification of simple   $SL_2(\bar{\mathbb{F}}_p)$-modules  admitting $\bf T$-stable lines in cross characteristic}

\author{Junbin Dong}
\address{Institute of Mathematical Sciences, ShanghaiTech University, 393 Middle Huaxia Road, Pudong, Shanghai 201210, China.}
\email{dongjunbin@shanghaitech.edu.cn}

\subjclass[2010]{20C07, 20G05}


\keywords{Reductive group, simple module,  multiplicative character }

\begin{abstract}
 Let $\bf T$ be the group of  diagonal matrices in $SL_2(\bar{\mathbb{F}}_p)$, where $p$ is a prime number.  Let  $\Bbbk$ be an algebraically closed field  of characteristic not equal to $2$ and $p$.  We classify all the irreducible $\Bbbk$-representations of $SL_2(\bar{\mathbb{F}}_p)$ that admit $\bf T$-stable lines.
\end{abstract}

\maketitle

\section{Introduction}

Let  ${\bf G}= SL_2(\bar{\mathbb{F}}_p)$, where $\mathbb{F}_p$ denotes the finite field of $p$ elements and $p$ is a  prime number.
Let  $\Bbbk$ be an algebraically closed field whose characteristic $r$ is different from $2$ and $p$.
According to  a result of Borel and Tits \cite[Theorem 10.3 and Corollary 10.4]{BT}, we know that except the trivial representation, all other irreducible representations of $SL_2(\bar{\mathbb{F}}_p)$ are infinite dimensional. N.H. Xi studied the abstract representations of $SL_2(\bar{\mathbb{F}}_p)$ by taking the direct limit of the finite-dimensional representations of $SL_2(\mathbb{F}_{p^m})$ and he got many interesting results in \cite[Section 6]{Xi}.  A natural question concerns  the  classification of simple   $SL_2(\bar{\mathbb{F}}_p)$-modules. However this problem  is  by no means easy to tackle.  This can be understood from the fact that  $\mathfrak{sl}_2(\mathbb{C})$ is the only complex semisimple Lie algebra whose simple modules are described (see \cite{M}).

Let  ${\bf T}$  be the diagonal matrices and ${\bf U}$ be the strictly upper unitriangular matrices in $SL_2(\bar{\mathbb{F}}_p)$. Let ${\bf B}$ be the Borel subgroup generated by ${\bf T}$ and ${\bf U}$, which is the upper triangular matrices in $SL_2(\bar{\mathbb{F}}_p)$.
There are two natural isomorphisms
$$h:   \bar{\mathbb{F}}^*_p\rightarrow{\bf T}, \   h(c)=\begin{pmatrix}c &0 \\0&c^{-1}\end{pmatrix}; \ \ \ \
\varepsilon : \bar{\mathbb{F}}_p\rightarrow{\bf U}, \ \  \varepsilon(x)= \begin{pmatrix}1&x\\0&1\end{pmatrix} $$
which satisfies $h(c)\varepsilon(x)h(c)^{-1}= \varepsilon(c^2x)$. The simple root
$\alpha: {\bf T}\rightarrow  \bar{\mathbb{F}}^*_p$ is given by $\alpha(h(c))= c^2$.
Let ${\bf N}$ be the normalizer of ${\bf T}$ in ${\bf G}$ and  $W= {\bf N}/{\bf T}$ be the  Weyl group.  We set
$s=\begin{pmatrix}0 &1\\  -1 &0\end{pmatrix}$, which is the simple reflection of $W$.
One has that
\begin{align*}
s \varepsilon(x) s=\displaystyle \varepsilon(-x^{-1}) s h(-x) \varepsilon(-x^{-1}).
\end{align*}
Let $M$ be  any representation of $SL_2(\bar{\mathbb{F}}_p)$. For any character $\theta:  {\bf T}  \rightarrow  \Bbbk^*$, we set
$$M_{\theta}= \{m\in M \mid  t m= \theta(t) m, \forall  t\in {\bf T}\}.$$
Denote $M^{\bf T}= \displaystyle \bigoplus_{\theta \in  \widehat{\bf T}} M_{\theta}$, where $ \widehat{\bf T}$ is the character group of $ \bf T$.
We say that a representation $M$  of   $SL_2(\bar{\mathbb{F}}_p)$  admits $\bf T$-stable lines if $M^{\bf T}\ne 0$.

In \cite{D}, we classify all the  simple $SL_2(\bar{\mathbb{F}}_p)$-modules that admit $\bf T$-stable lines under the assumption that  $\text{char}\ \Bbbk=0$.  For completeness, we list  all the simple modules. Let $\theta \in \widehat{\bf T}$  and it  can be regarded as a character of ${\bf B}$ by the homomorphism ${\bf B}\rightarrow{\bf T}$. Let ${\Bbbk}_\theta$ be the corresponding ${\bf B}$-module. We consider the induced module $\mathbb{M}(\theta)=\Bbbk{\bf G}\otimes_{\Bbbk{\bf B}}{\Bbbk}_\theta$. Let ${\bf 1}_{\theta}$ be a fixed nonzero element in ${\Bbbk}_\theta$. We abbreviate $x{\bf 1}_{\theta}:=x\otimes{\bf 1}_{\theta}\in\mathbb{M}(\theta)$ for $x\in {\bf G}$.
When $\theta$ is trivial,  we set $\eta_s= (1-s) {\bf 1}_{\text{tr}}$ and thus  $\op{St}=  \Bbbk {\bf U} \eta_{s}$ is the Steinberg module.
Moreover, $\mathbb{M}(\op{tr})/\op{St}$ is the trivial module $ \Bbbk_{\text{tr}}$ of $SL_2(\bar{\mathbb{F}}_p)$.
By  \cite[Theorem 3.1]{CD2},  $ \Bbbk_{\op{tr}}$, $\op{St}$, $\mathbb{M}(\theta)$  are simple $SL_2(\bar{\mathbb{F}}_p)$-modules,  where $\theta \in \widehat{\bf T}$ is nontrivial. The main result of this paper is as follows.

\begin{theorem} \label{mainthm}
Let  $\Bbbk$ be an algebraically closed field  of characteristic not equal to $2$ and $p$.  Then  all  simple $SL_2(\bar{\mathbb{F}}_p)$-modules
over $\Bbbk$ admitting $\bf T$-stable lines are precisely  $ \Bbbk_{\op{tr}}$, $\op{St}$,  and $\mathbb{M}(\theta)$ for all nontrivial characters  $\theta \in \widehat{\bf T}$.
\end{theorem}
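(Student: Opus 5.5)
Let $M$ be a simple $\Bbbk{\bf G}$-module with $M^{\bf T}\neq 0$, and pick $0\neq m\in M_\theta$ for some $\theta\in\widehat{\bf T}$. The strategy is to reduce the classification to Frobenius reciprocity: produce a nonzero vector in $M$ that is fixed by ${\bf U}$ and on which ${\bf T}$ acts by a character $\lambda$. Such a vector gives a nonzero ${\bf G}$-map $\mathbb{M}(\lambda)\to M$, which is surjective because $M$ is simple. By \cite[Theorem 3.1]{CD2}, $\mathbb{M}(\lambda)$ is simple when $\lambda$ is nontrivial, so then $M\cong\mathbb{M}(\lambda)$. When $\lambda$ is trivial, the submodules of $\mathbb{M}(\op{tr})$ are $0$, $\op{St}$ and $\mathbb{M}(\op{tr})$; this follows from simplicity of $\op{St}$ together with $\mathbb{M}(\op{tr})/\op{St}\cong\Bbbk_{\op{tr}}$. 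Hence $M$ is $\op{St}$ or $\Bbbk_{\op{tr}}$. Conversely, all of these modules contain ${\bf T}$-eigenvectors (${\bf 1}_\theta$, $\eta_s$, and the trivial line), so they do admit ${\bf T}$-stable lines. This handles the "precisely" direction.

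The core step is producing a ${\bf B}$-eigenvector. I would use the exhaustion ${\bf G}=\bigcup_n G_n$ with $G_n=SL_2(\mathbb{F}_{q_n})$ and $q_n=p^{n!}$, and similarly ${\bf U}_n$, ${\bf T}_n$. Because $p\neq r$, the group ${\bf U}_n$ is a $p$-group of order invertible in $\Bbbk$. So the averaging idempotent $e_n=|{\bf U}_n|^{-1}\sum_{u\in{\bf U}_n}u$ is available, and $e_n$ commutes with ${\bf T}$ up to conjugation, since $h(c)$ normalizes ${\bf U}_n$ whenever $c\in\mathbb{F}_{q_n}^*$. The plan is to show that $e_n m\neq 0$ for suitable $n$, or failing that that $e_n(s m)\neq 0$, using the Bruhat decomposition and the relation $s\varepsilon(x)s=\varepsilon(-x^{-1})sh(-x)\varepsilon(-x^{-1})$. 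Once some $e_n$ applied to a ${\bf T}$-eigenvector is nonzero, that vector is ${\bf U}_n$-fixed and ${\bf T}_n$-semi-invariant. To pass to the full ${\bf U}$, I would consider the spaces $M^{{\bf U}_n}\cap M_{\theta}$, or rather use the ${\bf T}$-eigenvector $m$ and the subspaces $e_n\Bbbk{\bf T}$-spans, and argue that these stabilize. A cleaner route may be this: the $\theta$-isotypic part of $\Bbbk{\bf U}m$, under the ${\bf T}$-action twisted by $\alpha$, is constrained because ${\bf T}$ acts on ${\bf U}\setminus\{1\}$ with finitely many orbits (two orbits, coming from squares versus non-squares, in each finite level). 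Using this, one shows that $\sum_{u\in{\bf U}_n}u\,m$, suitably normalized, is eventually independent of $n$ modulo a relation forced by $m$ being a ${\bf T}$-eigenvector. Alternatively, following \cite{D}, I would use the operators $\underline{{\bf T}_n}=\sum_{t\in{\bf T}_n}t$ and the identity expressing $\underline{{\bf T}_n}\,s\,\underline{{\bf U}_n}$ in terms of $\underline{{\bf U}_n}$ to show that the image of $m$ in a finite-level Hecke-type algebra determines a nonzero ${\bf U}$-invariant.

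I expect the main obstacle to be exactly this passage from finite levels to a genuine ${\bf U}$-fixed vector. The characteristic-zero argument of \cite{D} likely used semisimplicity of $\Bbbk G_n$, and here $r$ may divide $|G_n|$. To get around this, I would exploit only the facts that $|{\bf U}_n|$ and $2$ are invertible in $\Bbbk$, so that the idempotent $e_n$ and the splitting by the quadratic character of ${\bf T}_n$ are legitimate. I would then show directly that if $e_n m=0$ and $e_n s m=0$ for all large $n$, then $m$ generates a proper submodule, contradicting simplicity; this is the place where $r\neq 2$ should enter. With a nonzero ${\bf B}$-eigenvector in hand, Frobenius reciprocity and \cite[Theorem 3.1]{CD2} finish the proof as described above.
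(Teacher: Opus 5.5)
\textbf{There is a genuine gap, and your core step is false for one of the modules in the list.}

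\emph{The Steinberg module has no nonzero ${\bf U}$-fixed vector.} The module $\mathbb{M}(\op{tr})=\Bbbk{\bf G}\otimes_{\Bbbk{\bf B}}\Bbbk_{\op{tr}}$ is the permutation module on
${\bf G}/{\bf B}=\{{\bf B}\}\sqcup\{\varepsilon(x)s{\bf B}\mid x\in\bar{\mathbb{F}}_p\}$.
Since ${\bf U}$ acts simply transitively on the second piece, $\mathbb{M}(\op{tr})^{\bf U}=\Bbbk{\bf 1}_{\op{tr}}$. But $\op{St}=\Bbbk{\bf U}\eta_s$ is the augmentation kernel and does not contain ${\bf 1}_{\op{tr}}$. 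Nevertheless $\Bbbk\eta_s$ is a ${\bf T}$-stable line in St. So no argument can produce a ${\bf B}$-eigenvector in every simple $M$ with $M^{\bf T}\ne0$.

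\emph{Your deduction in the trivial case is also wrong.} You claim a surjection $\mathbb{M}(\op{tr})\twoheadrightarrow M$ yields $M\cong\op{St}$ or $\Bbbk_{\op{tr}}$. With the lattice $0\subset\op{St}\subset\mathbb{M}(\op{tr})$ that you yourself describe, St is the socle of $\mathbb{M}(\op{tr})$, not a quotient. The only simple quotient is $\Bbbk_{\op{tr}}$.

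\emph{Finite-level averaging does not pass to the limit.} The same example shows this:
$e_n\eta_s={\bf 1}_{\op{tr}}-q_n^{-1}\sum_{x\in\mathbb{F}_{q_n}}\varepsilon(x)s{\bf 1}_{\op{tr}}\neq0$ for every $n$, yet $\op{St}^{\bf U}=0$.

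\emph{The nontrivial case is not addressed either.} For nontrivial $\theta$ a ${\bf U}$-fixed vector does exist, but only as a consequence of the theorem. Your sketch gives no mechanism for finding it, and that existence is essentially the whole content of the result.

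\textbf{How the paper proceeds.} It applies Frobenius reciprocity to $\op{Ind}_{\bf T}^{\bf G}\Bbbk_\theta$ rather than to $\mathbb{M}(\theta)$. A ${\bf T}$-eigenvector alone then gives a surjection $\op{Ind}_{\bf T}^{\bf G}\Bbbk_\theta\twoheadrightarrow M$, and the task becomes classifying the simple quotients of $\op{Ind}_{\bf T}^{\bf G}\Bbbk_\theta$.

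For trivial $\theta$, the module splits, using $r\ne2$, as $\op{Ind}_{\bf N}^{\bf G}\Bbbk_+\oplus\op{Ind}_{\bf N}^{\bf G}\Bbbk_-$. Their unique simple quotients are $\Bbbk_{\op{tr}}$ and St respectively; this is how St enters, as a quotient of the sign-twisted piece.

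For nontrivial $\theta$, the paper shows that every $\xi\notin\ker\varphi_e\cup\ker\varphi_s$ generates the whole module. This needs two ingredients:
\begin{enumerate}
\item The nonvanishing, in characteristic $r$, of linear combinations of translates $\lambda(x+u_i)$ of a nontrivial multiplicative character (Lemma~\ref{keylemma}). This is used to manufacture an element $\zeta=\sum_z c_z\overline{U_{p^n}}s\varepsilon(z){\bf 1}_\theta$ with $\sum c_z\ne0$ and $\sum c_z\theta^s(-z)\ne0$.
\item A Jordan-form analysis of the action of $h(\omega^{1/2})$ (Lemma~\ref{nontrReplem}). This handles the case $r\mid p^n-1$, where that operator is not diagonalizable.
\end{enumerate}
Neither ingredient appears in your proposal.
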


According to \cite[Proposition 4.3]{D1} and \cite[Proposition 4.5]{D1}, one direct summand of $\text{St} \otimes \text{St}$ (denoted by $V_-$) is finitely generated and thus  has simple quotients. However, any simple quotient of $V_-$ does not admit $\bf T$-stable lines. The simple quotients of  $V_-$ yield some new infinite-dimensional simple modules of $SL_2(\bar{\mathbb{F}}_p)$  that have not been previously observed.

Now let $M$ be  a representation of  $SL_2(\bar{\mathbb{F}}_p)$ such that $M_{\theta} \ne 0$ for some $\theta\in \widehat{\bf T}$. Then using Frobenius reciprocity, we have
$$\op{Hom}_{\Bbbk {\bf G}}(\op{Ind}^{\bf G}_{\bf T} \Bbbk_{\theta}, M) \cong \op{Hom}_{\Bbbk {\bf T}}( \Bbbk_{\theta}, M) \cong M_{\theta} \ne 0.$$
In order to prove Theorem \ref{mainthm}, it is enough to
study the simple quotients of the induced module $\op{Ind}_{\bf T}^{\bf G} \Bbbk_\theta$ for each $\theta \in \widehat{\bf T}$.
Therefore, the rest of the paper is organized as follows:  in Section 2,  we consider the simple quotients of the induced module $\op{Ind}_{\bf T}^{\bf G} \Bbbk_\theta$ where $\theta$ is trivial. Section 3 presents a key lemma in preparation for the nontrivial case, which is studied in   Section 4. In Section 5, we provide a supplementary result concerning the linear  independence of the translates of  nontrivial  multiplicative characters of finite fields.

\section{The trivial character case}
Firstly, we introduce some notations.
For any finite subset $X$ of  ${\bf G}= SL_2(\bar{\mathbb{F}}_p)$, let $\overline{X}:=\displaystyle \sum_{x\in X}x \in\Bbbk{\bf G}$.   We set
$U_{p^n}= \{\varepsilon(x)\mid x\in \mathbb{F}_{p^n}\}.$
In this section, we  deal with  the case where $\theta$ is trivial.
Noting that $\op{char} \Bbbk \ne 2$  and ${\bf N} / {\bf T} \cong W=\{e, s\} $, it is easy to see  that
$$\op{Ind}_{\bf T} ^{\bf G} \Bbbk_{\text{tr}}\cong \op{Ind}_{\bf N} ^{\bf G} \Bbbk_{+} \oplus \op{Ind}_{\bf N} ^{\bf G} \Bbbk_{-},$$
where $\Bbbk_{+}$ is the trivial representation of ${\bf N}$ and $\Bbbk_{-}$ is the sign representation of $W$, which also can be regarded as a representation of  ${\bf N}$.  Let  ${\bf 1}_{\op{tr}}$   be a fixed nonzero element in $\Bbbk_{\op{tr}}$. Denote  ${\bf 1}_{+}=\frac{1}{2}(e+s) {\bf 1}_{\op{tr}}$ and ${\bf 1}_{-}=\frac{1}{2}(e-s) {\bf 1}_{\op{tr}}$.
We abbreviate $x{\bf 1}_{+}:=x\otimes{\bf 1}_{+}\in \op{Ind}_{\bf N} ^{\bf G} \Bbbk_{+}$ and
$x{\bf 1}_{-}:=x\otimes{\bf 1}_{-}\in \op{Ind}_{\bf N} ^{\bf G} \Bbbk_{-}$ for any $x\in {\bf G}$.

\begin{lemma} \label{trReplem}
Let $M$ be a  $\Bbbk {\bf G}$-submodule of $ \op{Ind}_{\bf T} ^{\bf G} \Bbbk_{\op{tr}}$. If there exists $n\in \mathbb{N}$ such that
$$\xi=\sum_{z\in \mathbb{F}^*_{p^n} }  a_z   \overline{U_{p^n}} s  \varepsilon(z) {\bf 1}_{\op{tr}} \in M, $$
where $a_z\in \Bbbk$ and $A:= \displaystyle \sum_{z\in \mathbb{F}^*_{p^n} }  a_z  \ne 0$.
Then we have $( e+ s){\bf 1}_{\op{tr}} \in M$.
\end{lemma}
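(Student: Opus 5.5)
The plan is to repeatedly apply averaging operators from $\Bbbk{\bf G}$ to $\xi$ until only a multiple of $A$ survives, and then to reach $(e+s){\bf 1}_{\op{tr}}$ by a final computation inside a finite group $SL_2(\mathbb{F}_q)$. All averages will be over finite groups whose order is a power of $p$ or prime to $\op{char}\Bbbk$, so we may divide by them. I expect the last step to be the main obstacle.

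\textbf{Step 1: torus averaging.} Since ${\bf T}$ acts trivially on ${\bf 1}_{\op{tr}}$ and $h(c)s=sh(c^{-1})$, we get
$$h(c)\,\overline{U_{p^n}}\, s\varepsilon(z){\bf 1}_{\op{tr}}=\overline{U'}\, s\,\varepsilon(c^{-2}z){\bf 1}_{\op{tr}},$$
where $U'=h(c)U_{p^n}h(c)^{-1}$. Choose $k$ with $n\mid k$ and put $q=p^{2k}$. Every element of $\mathbb{F}_{p^n}^*$ is then a square in $\mathbb{F}_q^*$. I first apply $\overline{U_q}$ to $\xi$; since $U_{p^n}\subset U_q$, this gives $p^{n}\sum_z a_z\overline{U_q}s\varepsilon(z){\bf 1}_{\op{tr}}\in M$. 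For $c\in\mathbb{F}_q^*$, conjugation by $h(c)$ preserves $U_q$. So summing $h(c)$ over $c\in\mathbb{F}_q^*$, each $z\in\mathbb{F}_{p^n}^*$ contributes every square $y\in S_q:=(\mathbb{F}_q^*)^2$ the same number of times. Hence
$$\eta:=\overline{U_q}\, s\sum_{y\in S_q}\varepsilon(y){\bf 1}_{\op{tr}}\in M,$$
after dividing by a nonzero multiple of $A$ (a power of $p$ times $A$, times $2$). This is exactly where $A\neq0$ is used.

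\textbf{Step 2: reduce to a finite group.} From now on everything lies in $\op{Ind}_{T_q}^{G_q}\Bbbk_{\op{tr}}\subset \op{Ind}_{\bf T}^{\bf G}\Bbbk_{\op{tr}}$, where $G_q=SL_2(\mathbb{F}_q)$ and $T_q={\bf T}\cap G_q$. The goal becomes a finite-group statement: the $\Bbbk G_q$-submodule generated by $\eta$ contains $(e+s){\bf 1}_{\op{tr}}$. The identity $s\varepsilon(x)s=\varepsilon(-x^{-1})sh(-x)\varepsilon(-x^{-1})$, together with the triviality of ${\bf T}$ on ${\bf 1}_{\op{tr}}$, gives
$$s\varepsilon(y){\bf 1}_{\op{tr}}=\varepsilon(-y^{-1})\,s\,\varepsilon(-y)\,s{\bf 1}_{\op{tr}}.$$
This lets me rewrite $\eta$ in terms of the $(U_q,T_q)$-double-coset basis of $G_q/T_q$. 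I will also form the companion vector $\eta'$ with the nonsquares in place of $S_q$, using $s$-translates and conjugation by elements of ${\bf T}$ outside $T_q$. Note that $\eta+\eta'=\overline{U_q}s(\overline{U_q}-e){\bf 1}_{\op{tr}}$.

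\textbf{Step 3: extract $(e+s){\bf 1}_{\op{tr}}$ (main obstacle).} I would combine $\eta$, $s\eta$, $\overline{U_q}s\eta$ and their torus-averages, and solve for $(e+s){\bf 1}_{\op{tr}}$ as a linear combination. The coefficients would be built from the numbers $q$, $|S_q|=(q-1)/2$ and the Bruhat structure constants of $G_q$. The difficulty is that these coefficients must be nonzero in characteristic $r\neq 2,p$, and that $r$ may divide $q\pm1$. I would try first to arrange the combination so that only powers of $p$ and $2$ appear as denominators. If that fails, I would exploit the freedom in $k$: enlarge $q$ so that the offending factor $q\pm1$ is prime to $r$, possible since $p$ has some finite multiplicative order modulo $r$. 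Because the elements of Step 1 exist for every such $k$, this would give $(e+s){\bf 1}_{\op{tr}}\in M$.
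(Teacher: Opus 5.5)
There is a genuine gap, and it already sits in Step 1, not only in Step 3: the full torus average discards exactly the information the lemma needs in the modular case.

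Let $\epsilon\colon\op{Ind}_{\bf T}^{\bf G}\Bbbk_{\op{tr}}\to\Bbbk_{\op{tr}}$, $g{\bf 1}_{\op{tr}}\mapsto 1$, be the augmentation. It is $\Bbbk{\bf G}$-linear, so $\epsilon(\eta)=0$ forces $\Bbbk{\bf G}\eta\subseteq\ker\epsilon$. We have $\epsilon((e+s){\bf 1}_{\op{tr}})=2\neq0$ and $\epsilon(\xi)=p^nA\neq0$, but $\epsilon(\eta)=q(q-1)/2$, and likewise for $\eta'$. Since $q=p^{2k}$ with $n\mid k$, you get $r\mid q-1$ for \emph{every} admissible $q$ as soon as $r\mid p^{2n}-1$. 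In particular this happens whenever $r\mid p^n-1$, hence for all $n$ if $p\equiv 1\pmod r$. In that case $(e+s){\bf 1}_{\op{tr}}\notin\Bbbk{\bf G}\eta$, so no combination of $\eta,\eta',s\eta,\overline{U_q}s\eta$ in Step 3 can succeed.

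Your escape via the multiplicative order $d$ of $p$ modulo $r$ goes the wrong way. One has $p^{2k}\equiv1\pmod r$ whenever $d\mid 2k$, and this holds for all multiples $k$ of $n$ once $d\mid 2n$. The loss is intrinsic to full symmetrization: collapsing $\sum_z a_z\varepsilon(z)$ to $A$ times a single orbit sum requires averaging over a torus subgroup containing $\mathbb F^*_{p^n}$, whose order multiplies the augmentation. When $r\nmid q-1$ your route does work: $\overline{U_q}s\eta=\frac{q-1}{2}\,\overline{U_q}(e+s){\bf 1}_{\op{tr}}+\alpha\eta+\beta\eta'$ for some $\alpha,\beta\in\Bbbk$, and \cite[Lemma 3.6]{CD2} finishes. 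This is essentially the paper's diagonalizable case.

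The missing idea is to keep $\xi$ itself and never divide by $p^n-1$. The paper uses the full average $\overline{\mathfrak D_{p^n}}\xi=A\,\mathbf v_1$, with $\mathbf v_1=\overline{U_{p^n}}s\overline{U^*_{p^n}}{\bf 1}_{\op{tr}}$, only as an auxiliary vector in $M$. The real input is
$$\overline{U_{p^n}}s\xi=A\,\overline{U_{p^n}}(e+s){\bf 1}_{\op{tr}}+\sum_{z\in\mathbb F^*_{p^n}}b_z\,\overline{U_{p^n}}s\varepsilon(z){\bf 1}_{\op{tr}},$$
whose $A$-term has nonzero augmentation. Write $\mathbf x=\overline{U_{p^n}}{\bf 1}_{\op{tr}}$ and $\mathbf y=\overline{U_{p^n}}s{\bf 1}_{\op{tr}}$. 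The paper projects onto the generalized $1$-eigenspace of $\mathcal A=h(\omega^{1/2})$, which is a polynomial in a group element and so preserves $M$. When $r\mid p^n-1$ this eigenspace is spanned by $\mathbf x,\mathbf y,\mathbf v_1,\dots,\mathbf v_m$. Two further facts are used:
\begin{itemize}
\item $\overline{U_{p^n}}s$ sends the $J_m(\lambda)$-block into the generalized $\lambda^{-1}$-eigenspace, compatibly with the Jordan filtration.
\item The $\mathcal A$-translates of the $J_m(1)$-component of $\xi$ put $\mathbf v_1,\dots,\mathbf v_k$ into $M$.
\end{itemize}
Together these leave $A(\mathbf x+\mathbf y)\in M$.

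Your Step 2 reformulation as a statement inside $\Bbbk G_q$ alone is also too strong. Here $N_q={\bf N}\cap G_q$. Since $\eta$ is $U_q$-invariant, when $\Bbbk G_q$ is semisimple the module $\Bbbk G_q\eta$ has no cuspidal constituent. For odd $p$, however, $\Bbbk G_q(e+s){\bf 1}_{\op{tr}}\cong\op{Ind}_{N_q}^{G_q}\Bbbk_+$ does have one. So in general the passage from $\overline{U_q}(e+s){\bf 1}_{\op{tr}}$ to $(e+s){\bf 1}_{\op{tr}}$ must leave $G_q$, which is what \cite[Lemma 3.6]{CD2} supplies.
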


\begin{proof}   Choose $\mathfrak{D}_{p^n}\subset {\bf T}$ such that $\alpha:\mathfrak{D}_{p^n}\rightarrow  \mathbb{F}^*_{p^n}$ is a bijection. It is easy to check that
\begin{align}\label{eq2.1}
\overline{\mathfrak{D}_{p^n}}\ \xi =  A \overline{U_{p^n}} s \overline{U^*_{p^n}}{\bf 1}_{\op{tr}}  \in M.
\end{align}
which implies that  $\displaystyle \overline{U_{p^n}} s \overline{U^*_{p^n}}{\bf 1}_{\op{tr}}\in M$ since $A\ne 0$. Let $V$  be the space spanned by$\{  \overline{U_{p^n} } {\bf 1}_{\op{tr}}, \overline{U_{p^n}} s  \varepsilon(z){\bf 1}_{\op{tr}}\mid  z\in \mathbb{F}_{p^n}\}$.
Let $\omega$ be the generator of the cyclic group $\mathbb{F}^*_{p^n}$.  Denote by $\mathcal{A}:  V \rightarrow V$ the linear map induced  by the action of $h(\omega^{\frac{1}{2}})$ on $V$.  Then the minimal polynomial of $\mathcal{A}$ is $t^{p^n-1}-1$.   The eigenspace corresponding to the eigenvalue $1$ is spanned by $ \overline{U_{p^n}} {\bf 1}_{\op{tr}}$,   $\overline{U_{p^n}} s{\bf 1}_{\op{tr}}$  and $\overline{U_{p^n}} s \overline{U^*_{p^n}}{\bf 1}_{\op{tr}}$. It is easy to see that  $\overline{U_{p^n}} s \xi$ has the following form
\begin{align}\label{eq2.2}
\overline{U_{p^n}} s    \xi= A( \overline{U_{p^n}} {\bf 1}_{\op{tr}} +\overline{U_{p^n}} s{\bf 1}_{\op{tr}}  )+\sum_{z\in \mathbb{F}^*_{p^n} }  b_z   \overline{U_{p^n}} s  \varepsilon(z) {\bf 1}_{\op{tr}},
\end{align}
where $b_z\in \Bbbk$.  When $r \nmid p^n-1$, then the linear map $\mathcal{A}$ is diagonalizable.
 Thus using (\ref{eq2.2}),  it is easy to see that $$A(\overline{U_{p^n}} {\bf 1}_{\op{tr}} +\overline{U_{p^n}} s{\bf 1}_{\op{tr}}) + b \overline{U_{p^n}} s \overline{U^*_{p^n}}{\bf 1}_{\op{tr}} \in M$$
for some $b \in \Bbbk$.  Noting that $\overline{U_{p^n}} s \overline{U^*_{p^n}}{\bf 1}_{\op{tr}} \in M $ by (\ref{eq2.1}), we get  $\overline{U_{p^n}} {\bf 1}_{\op{tr}}  +\overline{U_{p^n}} s{\bf 1}_{\op{tr}}  \in M$. Thus by   {\cite[Lemma 3.6]{CD2}}, we have $( e+ s){\bf 1}_{\op{tr}} \in M$.

Now we consider the case $r| p^n-1$.  Write $p^n-1= ml$, where $m$ is a power of $r$ and $(l,r)=1$. Then the Jordan canonical form of $\mathcal{A}$ is $$J_1(\lambda_1) \dot{+} J_1(\lambda_1) \dot{+} J_{m}(\lambda_1) \dot{+} J_{m}(\lambda_2) \dot{+} \cdots \dot{+} J_{m}(\lambda_l),$$
where $\lambda_1=1, \lambda_2, \cdots, \lambda_l$ are the roots of the equation $t^l-1=0$.  Let $\mathbf{u}_1, \mathbf{u}_2, \cdots, \mathbf{u}_m$ be a Jordan basis corresponding to the block $J_m(\lambda)$ ($\lambda \ne 1$), where $(\mathcal{A}- \lambda \mathcal{E}) \mathbf{u}_{i+1}= \mathbf{u}_i$. Set $ \mathbf{w}_i = \overline{U_{p^n}}  s  \mathbf{u}_i \in V$. Note that $\mathcal{A} \overline{U_{p^n}}  s  \mathbf{u}_i = \overline{U_{p^n}}  s \mathcal{A}^{-1} \mathbf{u}_i $.  Then we have
$$ \mathcal{A} \mathbf{w}_{i+1}=  \overline{U_{p^n}}  s  \mathcal{A}^{-1} \mathbf{u}_{i+1} = \lambda^{-1}\overline{U_{p^n}}  s (  \mathbf{u}_{i+1} -  \mathcal{A}^{-1}\mathbf{u}_{i} )=\lambda^{-1}( \mathbf{w}_{i+1}- \mathcal{A} \mathbf{w}_{i}).$$
Thus  $(  \mathcal{A}- \lambda^{-1}  \mathcal{E})  \mathbf{w}_{i+1}= - \lambda^{-1}\mathcal{A} \mathbf{w}_{i}$.
Therefore we have 
\begin{align}\label{eq2.7}
\mathbf{w}_{i}=\overline{U_{p^n}}  s  \mathbf{u}_i \in \ker{(\mathcal{A}- \lambda^{-1}  \mathcal{E})^{i} } 
\end{align} 
by induction on $i$. In particular, we see that   
\begin{align}\label{eq2.8}
\overline{U_{p^n}} s:  \ker{(\mathcal{A}- \lambda  \mathcal{E})^{m}} \rightarrow  \ker{(\mathcal{A}- \lambda^{-1} \mathcal{E})^{m}}
\end{align}
is a linear operator.

Let $\mathbf{v}_1, \mathbf{v}_2, \cdots, \mathbf{v}_m$ be a Jordan basis corresponding to the block $J_m(1)$, where $(\mathcal{A}- \mathcal{E}) \mathbf{v}_{i+1}= \mathbf{v}_i$ and $\mathbf{v}_1= \overline{U_{p^n}} s \overline{U^*_{p^n}}{\bf 1}_{\op{tr}}$.    Let $ \mathbf{v}'_i = \overline{U_{p^n}}  s  \mathbf{v}_i \in V$, which is in $ \ker{(\mathcal{A}- \mathcal{E})^{i} }$. Denote $\mathbf{x}= \overline{U_{p^n}} {\bf 1}_{\op{tr}}$ and $\mathbf{y}= \overline{U_{p^n}}s {\bf 1}_{\op{tr}}$. Thus $ \mathbf{v}'_k$ is a linear combination of $\mathbf{x}, \mathbf{y},  \mathbf{v}_1, \mathbf{v}_2, \cdots, \mathbf{v}_k$ for any $k=1,2,\cdots, m$.
Taking the direct summand  of  $\xi $ in the Jordan block $J_m(1)$, we get an element
$$\xi'= a_1 \mathbf{v}_1 +a_2 \mathbf{v}_2  \cdots  + a_k\mathbf{v}_k \in M, $$
where $a_1, a_2, \cdots, a_k \in \Bbbk$ and $a_k\ne 0$.    Thus we have $\mathbf{v}_k \in M$.  In (\ref{eq2.2}), take the direct summand  of  $\overline{U_{p^n}} s \xi$ in  the  Jordan block $J_1(\lambda_1) \dot{+} J_1(\lambda_1) \dot{+} J_m(1)$. Using (\ref{eq2.7}) and  (\ref{eq2.8}),  we get  an element
$$\xi''= A(\mathbf{x}+ \mathbf{y})+b_1  \mathbf{v}_1 + \cdots  + b_{k}\mathbf{v}_{k} \in M.$$
Thus we have $\mathbf{x}+ \mathbf{y}=\overline{U_{p^n}} {\bf 1}_{\op{tr}} +\overline{U_{p^n}} s{\bf 1}_{\op{tr}} \in M$.
Using {\cite[Lemma 3.6]{CD2}}, we get $( e+ s){\bf 1}_{\op{tr}} \in M$. The lemma is proved.
\end{proof}

Now we consider the simple quotient of $\op{Ind}_{\bf N} ^{\bf G} \Bbbk_{+}$. Let
$$\mathbb{M}_{+}=\{\sum_{g\in{\bf G}}a_g g {\bf 1}_{+} \mid\sum_{g\in{\bf G}}a_g=0\},$$
which is a $\Bbbk {\bf G}$-submodule of $\op{Ind}_{\bf N} ^{\bf G} \Bbbk_{+}$.
It is easy to see that $\op{Ind}_{\bf N} ^{\bf G} \Bbbk_{+} /\mathbb{M}_{+}$ is the trivial $\Bbbk {\bf G}$-module.
We show that $\mathbb{M}_{+}$ is the unique maximal submodule of $\op{Ind}_{\bf N} ^{\bf G} \Bbbk_{+}$.
Let $\xi \notin \mathbb{M}_{+}$ with the following expression
$$\xi=  \sum_{x, y\in \mathbb{F}_{p^m} } a_{x,y} \varepsilon(x)s \varepsilon(y){\bf 1}_{+}, \quad  \text{where} \  \sum_{x, y\in \mathbb{F}_{p^m} } a_{x,y}\ne 0.$$
Let $n>m$ with $m | n$ and  fix $u\in \mathbb{F}_{p^n} \setminus \mathbb{F}_{p^m}$. Let
$\eta= s \varepsilon(u) \xi $, which has the form
$$\eta = \displaystyle \sum_{x, y\in   \mathbb{F}_{p^n} } a_{x,y} \varepsilon(-(u+x)^{-1})s \varepsilon((u+x)^2y-(u+x)){\bf 1}_{+}. $$
Then we have
$$ \overline{U_{p^n}}\eta=\sum_{z\in \mathbb{F}^*_{p^n} } c_z   \overline{U_{p^n}} s  \varepsilon(z){\bf 1}_{+}, $$
where $\displaystyle  \sum_{z\in \mathbb{F}^*_{p^n} } c_z=  \sum_{x, y\in \mathbb{F}_{p^m}} a_{x,y} \ne 0$.
Note that  ${\bf 1}_{+}=\frac{1}{2}(e+s) {\bf 1}_{\op{tr}}$ and
$$  \overline{U_{p^n}} s  \varepsilon(z) s {\bf 1}_{\op{tr}} =\overline{U_{p^n}} s  \varepsilon(-z)  {\bf 1}_{\op{tr}}.$$
Therefore we have
$$  \overline{U_{p^n}}\eta=\frac{1}{2}\sum_{z\in \mathbb{F}^*_{p^n} } c_z  \overline{U_{p^n}} s  \varepsilon(z){\bf 1}_{\op{tr}}+ \frac{1}{2}\sum_{z\in \mathbb{F}^*_{p^n} }  c_z   \overline{U_{p^n}} s  \varepsilon(-z){\bf 1}_{\op{tr}}.$$
Thus by Lemma \ref{trReplem}, we see that ${\bf 1}_{+}\in \Bbbk {\bf G}\xi$ and
thus for any element $\xi \notin \mathbb{M}_{+}$,  we have $\Bbbk {\bf G}\xi =\op{Ind}_{\bf N} ^{\bf G} \Bbbk_{+}$. Therefore, the trivial module is the unique simple quotient module of $\op{Ind}_{\bf N} ^{\bf G} \Bbbk_{+}$.

Now we consider the simple  quotient modules of $\op{Ind}_{\bf N} ^{\bf G} \Bbbk_{-}$.  Let $\text{St}= \Bbbk {\bf U} \eta_{s}$ be the Steinberg module, where $\eta_s= (1-s) {\bf 1}_{\text{tr}}$. Then we have $s\eta_s =-\eta_s$ and $s \varepsilon(x)\eta_s =(\varepsilon(-x^{-1})-1) \eta_s$ for any $x\in \bar{\mathbb{F}}^*_p$. We show that $\op{St}$ is the unique simple module of $\op{Ind}_{\bf N} ^{\bf G} \Bbbk_{-}$.
For convenience, we denote
$$\Lambda(z)= (s \varepsilon(z) +1- \varepsilon(-{z}^{-1})) {\bf 1}_{-}$$
for each $z\in \bar{\mathbb{F}}^*_p$. Then we have $h(c)\Lambda(z)= \Lambda(c^{-2}z)$ for any $c\in \bar{\mathbb{F}}^*_p$. Moreover, it is easy to get the following lemma.
\begin{lemma} \label{easylemma}
The following identities hold:
  $$\varepsilon(z)\Lambda(z^{-1})=-\Lambda(-z^{-1}) ,\quad s\Lambda(z)=-\Lambda(-z^{-1})$$
  for any $z\in \bar{\mathbb{F}}^*_p$ and
  $$s\varepsilon(x)\Lambda(y)=\varepsilon(-x^{-1})\Lambda(x(xy-1))+\Lambda(x)-\Lambda(y^{-1}(xy-1)),$$
 where $x,y\in \bar{\mathbb{F}}^*_p$ and $xy\ne 1$.
\end{lemma}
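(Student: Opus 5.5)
The plan is to verify all three identities by direct computation in $\op{Ind}_{\bf N}^{\bf G}\Bbbk_-$, using only a few basic facts about the generator ${\bf 1}_-$.

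\textbf{Basic facts.} Since ${\bf 1}_-=\frac12(e-s){\bf 1}_{\op{tr}}$ spans the sign representation of ${\bf N}$, we have $t{\bf 1}_-={\bf 1}_-$ for all $t\in{\bf T}$ and $s{\bf 1}_-=-{\bf 1}_-$. We also use $s^2=h(-1)$, which is central in ${\bf G}$, and $h(c)\varepsilon(x)=\varepsilon(c^2x)h(c)$.

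\textbf{A reduction formula.} From $s\varepsilon(x)s=\varepsilon(-x^{-1})sh(-x)\varepsilon(-x^{-1})$ and $s{\bf 1}_-=-{\bf 1}_-$ I get, for $x\in\bar{\mathbb F}_p^*$,
$$s\varepsilon(x){\bf 1}_-=-s\varepsilon(x)s{\bf 1}_-=-\varepsilon(-x^{-1})sh(-x)\varepsilon(-x^{-1}){\bf 1}_-=-\varepsilon(-x^{-1})s\varepsilon(-x){\bf 1}_-.$$
The last step moves $h(-x)$ to the right, turning $\varepsilon(-x^{-1})$ into $\varepsilon(x^2\cdot(-x^{-1}))=\varepsilon(-x)$, and then absorbs $h(-x)$ into ${\bf 1}_-$. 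Equivalently, $\varepsilon(x^{-1})s\varepsilon(x){\bf 1}_-=-s\varepsilon(-x){\bf 1}_-$. This formula will drive every computation.

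\textbf{First identity.} Expand
$$\varepsilon(z)\Lambda(z^{-1})=\varepsilon(z)s\varepsilon(z^{-1}){\bf 1}_-+\varepsilon(z){\bf 1}_--{\bf 1}_-.$$
Applying the reduction formula with $x=z^{-1}$ gives $\varepsilon(z)s\varepsilon(z^{-1}){\bf 1}_-=-s\varepsilon(-z){\bf 1}_-$. Substituting, the expression becomes $-\bigl(s\varepsilon(-z)+1-\varepsilon(z)\bigr){\bf 1}_-=-\Lambda(-z^{-1})$, as required.

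\textbf{Second identity.} Compute
$$s\Lambda(z)=s^2\varepsilon(z){\bf 1}_-+s{\bf 1}_--s\varepsilon(-z^{-1}){\bf 1}_-=\varepsilon(z){\bf 1}_--{\bf 1}_--s\varepsilon(-z^{-1}){\bf 1}_-,$$
using that $h(-1)$ commutes with $\varepsilon(z)$ and fixes ${\bf 1}_-$. The right-hand side is $-\Lambda(-z^{-1})$.

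\textbf{Third identity.} Write
$$s\varepsilon(x)\Lambda(y)=s\varepsilon(x)s\varepsilon(y){\bf 1}_-+s\varepsilon(x){\bf 1}_--s\varepsilon(x)\varepsilon(-y^{-1}){\bf 1}_-$$
and treat the three terms separately.
\begin{itemize}
\item For the first term, use the displayed relation for $s\varepsilon(x)s$, then move $h(-x)$ to the right. This gives $\varepsilon(-x^{-1})sh(-x)\varepsilon(-x^{-1}+y){\bf 1}_-=\varepsilon(-x^{-1})s\varepsilon(x(xy-1)){\bf 1}_-$. Here $x(xy-1)\neq0$ because $xy\neq1$.
\item The second term is $s\varepsilon(x){\bf 1}_-$, which equals $\Lambda(x)-{\bf 1}_-+\varepsilon(-x^{-1}){\bf 1}_-$ by the definition of $\Lambda$.
\item The third term is $-s\varepsilon(x-y^{-1}){\bf 1}_-$, and $x-y^{-1}=y^{-1}(xy-1)$. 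By definition of $\Lambda$, $s\varepsilon(y^{-1}(xy-1)){\bf 1}_-=\Lambda(y^{-1}(xy-1))-{\bf 1}_-+\varepsilon(-y(xy-1)^{-1}){\bf 1}_-$.
\end{itemize}
Next, rewrite $\varepsilon(-x^{-1})s\varepsilon(x(xy-1)){\bf 1}_-$ as $\varepsilon(-x^{-1})\Lambda(x(xy-1))$ minus the correction $\varepsilon(-x^{-1})\bigl({\bf 1}_--\varepsilon(-x^{-1}(xy-1)^{-1}){\bf 1}_-\bigr)$. It then remains to check that all the leftover pure $\varepsilon(\cdot){\bf 1}_-$ terms cancel. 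This reduces to the scalar identity
$$-x^{-1}-x^{-1}(xy-1)^{-1}=-y(xy-1)^{-1}.$$
It holds since $-x^{-1}\bigl(1+(xy-1)^{-1}\bigr)=-x^{-1}\cdot xy\,(xy-1)^{-1}=-y(xy-1)^{-1}$. Both the $\pm{\bf 1}_-$ terms and the $\pm\varepsilon(-x^{-1}){\bf 1}_-$ terms then cancel.

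The only real obstacle is this sign and argument bookkeeping in the third identity, in particular tracking the conjugation by $h(-x)$ correctly; the first two identities are immediate from the reduction formula.
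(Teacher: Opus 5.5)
Your direct computation is correct and is exactly the routine verification the paper leaves to the reader (it only says the lemma is ``easy to get''). The one slip is in the first identity: substituting $x=z^{-1}$ into your reduction formula gives $\varepsilon(z)s\varepsilon(z^{-1}){\bf 1}_-=-s\varepsilon(-z^{-1}){\bf 1}_-$, not $-s\varepsilon(-z){\bf 1}_-$, and with $s\varepsilon(-z^{-1})$ in place of $s\varepsilon(-z)$ your expression is indeed $-\Lambda(-z^{-1})=-\bigl(s\varepsilon(-z^{-1})+1-\varepsilon(z)\bigr){\bf 1}_-$.
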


Let $\mathbb{M}_{-}$ be the submodule of $\op{Ind}_{\bf N} ^{\bf G} \Bbbk_{-}$ which is generated by $\Lambda(z), z\in \bar{\mathbb{F}}^*_p$. Thus $\op{Ind}_{\bf N} ^{\bf G} \Bbbk_{-}/ \mathbb{M}_{-} $ is isomorphic to the Steinberg module.
Let $\zeta\in \op{Ind}_{\bf N} ^{\bf G} \Bbbk_{-}$ which is not in $ \mathbb{M}_{-} $. According to Lemma \ref{easylemma}, then $\zeta$ has the following expression
$$\zeta= \sum_{xy\ne 1}a_{x,y} \varepsilon(x)\Lambda(y) {\bf 1}_{-} + \sum_{z\in \bar{\mathbb{F}}^*_p} b_{z} \Lambda(z){\bf 1}_{-}+ \sum_{u\in \bar{\mathbb{F}}_p} c_u\varepsilon(u) {\bf 1}_{-}  ,$$
where $c_u \ne 0$ for some $u$. There exists an integer $n\in \mathbb{N}$ such that $x, y,z,u\in \mathbb{F}_{p^n}$ whenever  $a_{x,y}\ne 0, b_{z}\ne 0$ and $c_u\ne 0$. Without loss of generality, we  assume that $c_{0}\ne 0$. Moreover, we assume that $\displaystyle \sum_{u\in \bar{\mathbb{F}}_p} c_u \ne 0$. Otherwise, we  consider $s \zeta$ instead of $\zeta$. Indeed, if we write
$$s\zeta=\sum_{x y\ne 1}a'_{x,y} \varepsilon(x)\Lambda(y) {\bf 1}_{-} + \sum_{z\in \bar{\mathbb{F}}^*_p} b'_{z} \Lambda(z){\bf 1}_{-}+ \sum_{u\in \bar{\mathbb{F}}_p} c'_{u} \varepsilon(u){\bf 1}_{-},$$
it is easy to see that $\displaystyle \sum_{u\in \bar{\mathbb{F}}_p} c'_{u}=-c_0$ which is nonzero using Lemma \ref{easylemma}.
For the convenience of later discussion, we denote by
$\displaystyle C= \sum_{u\in \bar{\mathbb{F}}_p} c_u$.
Note that $C$ is nonzero by our assumption. Thus we have
\begin{align}\label{eq2.3}
\overline{U_{p^n}} \zeta= C\overline{U_{p^n}} {\bf 1}_{-} +\sum_{z\in \mathbb{F}^*_{p^n}} d_z \overline{U_{p^n}}s\varepsilon(z){\bf 1}_{-},
\end{align}
where $d_z\in \Bbbk$.  It is easy to check that
\begin{align}\label{eq2.4}
\overline{U_{p^n}}s\varepsilon(z){\bf 1}_{-}= - \overline{U_{p^n}}s\varepsilon(z)s{\bf 1}_{-}=-\overline{U_{p^n}}s\varepsilon(-z){\bf 1}_{-}.
\end{align}
Let $t_0\in \bar{\mathbb{F}}^*_p$ such that $t^2_0=-1$. Using (\ref{eq2.3}) and (\ref{eq2.4}),  we get
$$ h(t_0)\overline{U_{p^m}} \zeta +\overline{U_{p^m}} \zeta =2C\overline{U_{p^n}} {\bf 1}_{-},$$
which implies that $\overline{U_{p^n}}{\bf 1}_{-} \in \Bbbk {\bf G}\zeta$.  Thus  we get ${\bf 1}_{-} \in \Bbbk {\bf G}\zeta $ by {\cite[Lemma 3.6]{CD2}}. So for any element $\zeta \notin \mathbb{M}_{-}$,  we see that $\Bbbk {\bf G}\zeta =\op{Ind}_{\bf N} ^{\bf G} \Bbbk_{-}$ and thus the Steinberg module is the unique simple quotient module of $\op{Ind}_{\bf N} ^{\bf G} \Bbbk_{-}$.

\section{A key lemma }

Following  the proof strategy in the paper \cite{D}, we find that  \cite[Corollary 3.3]{D} plays a key role.
The original  proof of   \cite[Corollary 3.3]{D}  in characteristic zero case relies on the main results established in \cite[Theorem 1.1]{ESS}.
In this section, we prove the following lemma, which is a generalization of \cite[Corollary 3.3]{D}.

\begin{lemma}\label{keylemma}
Let  $\Bbbk$ be an algebraically closed field with characteristic $r \ne p$.   Let  $\lambda : \bar{\mathbb{F}}^*_p \rightarrow \Bbbk^*$ be a nontrivial  multiplicative character and $u_1, u_2, \dots, u_n\in \bar{\mathbb{F}}^*_p$,  which are different from each other.
Given $a_1, a_2,\dots, a_n \in \Bbbk^*$, then there exists infinitely many elements $x\in  \bar{\mathbb{F}}^*_p$ such that
$$a_1\lambda (x+u_1)+ a_2 \lambda(x+u_2)+\dots + a_n\lambda(x+u_n)\ne 0.$$
\end{lemma}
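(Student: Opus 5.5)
We reduce to a finite field $\mathbb{F}_Q$ and use additive Fourier analysis there. This is available because $Q$ is a power of $p$ and $r\ne p$, so $Q$ is invertible in $\Bbbk$. We use the convention $\lambda(0)=0$. Only finitely many $x$ satisfy $x+u_i=0$ for some $i$, so this convention does not affect the statement. The guiding observation is that the additive Fourier transform of $\lambda$ on $\mathbb{F}_Q$ is a Gauss sum times $\lambda^{-1}$. The identity we want is then transported to a statement about additive characters evaluated at the distinct points $u_i$, where linear independence is elementary.

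\textbf{Step 1 (choice of field).} Since $\lambda$ is nontrivial, it is nontrivial on $\mathbb{F}_{q}^*$ for some $q$ with all $u_i\in\mathbb{F}_q$. It then remains nontrivial on $\mathbb{F}_Q^*$ for every $Q=q^k$. Fix a nontrivial additive character $\psi:\mathbb{F}_Q\to\Bbbk^*$, which exists since $\Bbbk$ contains primitive $p$-th roots of unity, and write $\psi_b(x)=\psi(bx)$. Set $f(x)=\sum_i a_i\lambda(x+u_i)$ on $\mathbb{F}_Q$.

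\textbf{Step 2 (Gauss sums).} Let $g=\sum_{x\in\mathbb{F}_Q^*}\lambda(x)\psi(x)$. The usual manipulation gives $\sum_x\lambda(x)\psi(bx)=\lambda^{-1}(b)g$ for $b\ne0$, and $0$ for $b=0$. It also gives $g\cdot\sum_x\lambda^{-1}(x)\psi(x)=\lambda(-1)Q$, which is nonzero in $\Bbbk$; hence $g\ne0$. Translating, for $b\ne0$ we get
$$\hat f(b):=\sum_{x\in\mathbb{F}_Q}f(x)\psi(bx)=g\,\lambda^{-1}(b)\,P(b),\qquad P(b)=\sum_i a_i\psi(-bu_i).$$

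\textbf{Step 3 ($f\not\equiv0$).} Suppose $f\equiv0$ on $\mathbb{F}_Q$. Then $P(b)=0$ for all $b\ne0$, so $\sum_i a_i\psi_{-u_i}=c\,\delta_0$ as functions of $b$, for some $c\in\Bbbk$. The $\psi_{-u_i}$ are distinct characters of the abelian group $\mathbb{F}_Q$. By orthogonality (again using that $Q$ is invertible), $\delta_0=Q^{-1}\sum_{w\in\mathbb{F}_Q}\psi_w$. Comparing coefficients in the basis of characters forces $c=0$ whenever $n<Q$, and then all $a_i=0$, a contradiction. So $f$ has a nonzero value on each large enough $\mathbb{F}_Q$.

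\textbf{Step 4 (infinitely many $x$; the main obstacle).} Suppose the set $E$ of good $x$ were finite, say $E\subset\mathbb{F}_{Q_0}$. For every $Q\ge Q_0$ (in the tower $q^k$), $f$ is then supported on $E$. Consequently $\hat f(b)=\sum_{x\in E}f(x)\psi(bx)$ lies in the span of at most $|E|$ fixed additive characters. Combined with Step 2, for $b\ne0$,
$$g\lambda^{-1}(b)P(b)=\sum_{x\in E}f(x)\psi(bx).$$
I would try to derive a contradiction by letting $Q\to\infty$ with $E$ fixed. Multiply both sides by $\lambda(b)\psi(-bw)$ and sum over $b\ne0$, then vary $w$. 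The left side becomes $g\sum_i a_i[\text{indicator of }w=-u_i]$ up to a $\delta$-correction. The right side becomes $\sum_{x\in E}f(x)\,g(\lambda)\,\lambda^{-1}(x-w)$, a shifted Gauss-sum expression that is supported on all $w\notin E$.

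Comparing supports of size at most $n+1$ against size about $Q-|E|$ should force $f|_E=0$, contradicting Step 3. This uncertainty-principle bookkeeping, uniform in $Q$, is where I expect the real work to lie. A fallback is to apply Step 3 to the character $\lambda$ on the larger field $\mathbb{F}_{Q}$ together with a dilation $x\mapsto cx$, $c\in\mathbb{F}_Q^*$. This rewrites $f(cx)$ as $\lambda(c)$ times the analogous sum with shifts $u_i/c$, and produces good points outside $\mathbb{F}_{Q_0}$.
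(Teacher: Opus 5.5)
Your Steps 1--3 are correct, but they only show that $f\not\equiv 0$ on each large $\mathbb{F}_Q$. That is the linear independence of translates, i.e.\ the paper's Corollary~\ref{Cor}, and it is compatible with all nonzero values of $f$ lying in one fixed finite set $E$. So Step~4 carries the whole lemma, and Step~4 is not an argument. If you carry out the transform you describe, you get, for every $Q$ in your tower with $E\subset\mathbb{F}_Q$ and every $w\in\mathbb{F}_Q$,
\[
\sum_{x\in E} f(x)\,\lambda^{-1}(x-w)\;=\;Q\sum_{i=1}^n a_i\,\delta_{w,-u_i}\;-\;\sum_{i=1}^n a_i .
\]
This is the paper's identity \eqref{eq3.1}; it also follows directly from $\sum_{x}\lambda(x+u)\lambda^{-1}(x-w)=-1$ for $u\ne -w$.

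The correction on the left is the constant $-\sum_i a_i$, not a $\delta$, and there is a factor $Q$. So if $\sum_i a_i\neq 0$, the left side is nonzero at every $w\notin\{-u_i\}$, and there is no small support to compare. If $\sum_i a_i=0$, the identity says that a combination of translates of $\lambda^{-1}$, with coefficients $f|_E$, vanishes outside the finite set $\{-u_i\}$. Showing that it does not vanish outside a finite set is exactly the lemma again, for $\lambda^{-1}$ with shifts $-E$ and coefficients $f|_E$. The duality only swaps the data. In fact, convolution with $\lambda^{-1}$ on $\mathbb{F}_Q$ has kernel exactly the constants. Hence for a fixed $Q$ the identity is equivalent to $f$ being constant on $\mathbb{F}_Q\setminus E$: it is a slight weakening of your hypothesis, not new information. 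The fallback fails for a similar reason. Step~3 applied to $x\mapsto f(cx)$ gives some $x$ with $f(cx)\neq 0$, but nothing prevents $cx\in E$. Under your hypothesis $x\mapsto f(cx)$ is supported on $c^{-1}E$, which is consistent with Step~3.

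What actually produces a contradiction is the $Q$-dependence. The left side above does not depend on $Q$, so at $w=-u_1$ you get $(Q-Q')a_1=0$ in $\Bbbk$ for any two admissible $Q,Q'$. This settles $r=0$, and more generally any situation where two admissible field sizes differ mod $r$; it is the paper's argument in its first case. But if $p\equiv 1\pmod r$, every power of $p$ equals $1$ in $\Bbbk$. Then the identity is literally the same for all $Q$, comparing fields gives nothing, and a different idea is needed.

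The paper's idea is a multiplicative averaging over the small field. Take $E\cup\{u_i\}\subset\mathbb{F}_{Q_0}$. For $x\notin\mathbb{F}_{Q_0}$ and $y\in\mathbb{F}_{Q_0}^*$ one has $xy\notin\mathbb{F}_{Q_0}$, so $0=f(xy)=\lambda(y)\sum_i a_i\lambda(x+u_iy^{-1})$. Summing over $y$ gives $\bigl(\sum_i a_i\bigr)\sum_{z\in\mathbb{F}_{Q_0}^*}\lambda(x+z)=0$. If $\sum_i a_i=0$, one uses $f(xy-u_1)=0$ instead, and $Q_0-1=0$ in $\Bbbk$ kills the resulting term $(Q_0-1)a_1\lambda(x)$. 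Either way $\sum_{z\in\mathbb{F}_{Q_0}}\lambda(x+z)=\lambda(x)$ for all $x\notin\mathbb{F}_{Q_0}$. The left side depends only on the coset $x+\mathbb{F}_{Q_0}$, so $\lambda$ is constant on these cosets, which forces $\lambda$ to be trivial.

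Without this ingredient, or an equivalent one, your proposal proves only the finite-field linear independence, not the existence of infinitely many points where the sum is nonzero.
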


We will use the orthogonality relations of characters for finite groups to prove Lemma \ref{keylemma}. The following proposition  is a well-known fact.

\begin{proposition}  \label{Orth}
Let $ \varphi,  \psi$ be two multiplicative characters of  $\mathbb{F}^*_{p^m}$ over $\Bbbk$.
Then
$$ \sum_{z\in {\mathbb{F}}^*_{p^m}} \varphi(z) \psi(z^{-1}) =(p^m-1)\delta_{\varphi,  \psi}.$$
In particular, one has that $ \displaystyle  \sum_{z\in {\mathbb{F}}^*_{p^m}} \chi(z) =0$ for any nontrivial multiplicative character $\chi$ of  $\mathbb{F}^*_{p^m}$.

\end{proposition}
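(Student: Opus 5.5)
The statement to prove is Proposition \ref{Orth}: for multiplicative characters $\varphi,\psi$ of $\mathbb{F}^*_{p^m}$ with values in $\Bbbk^*$, one has $\sum_{z} \varphi(z)\psi(z^{-1}) = (p^m-1)\delta_{\varphi,\psi}$. I would reduce everything to the single claim that $\sum_{z\in\mathbb{F}^*_{p^m}}\chi(z)=0$ for every nontrivial character $\chi$. Given that claim, the general identity follows at once: $\chi:=\varphi\psi^{-1}$, where $\psi^{-1}(z)=\psi(z)^{-1}=\psi(z^{-1})$, is again a multiplicative character, and $\varphi(z)\psi(z^{-1})=\chi(z)$.

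\emph{Case $\varphi=\psi$.} Then $\chi$ is trivial, so each term equals $1$. The sum is the number of elements of $\mathbb{F}^*_{p^m}$, namely $p^m-1$, read as an element of $\Bbbk$.

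\emph{Case $\varphi\neq\psi$.} Then $\chi$ is nontrivial, and the sum vanishes by the claim.

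For the claim, I would use the standard translation trick. Choose $w\in\mathbb{F}^*_{p^m}$ with $\chi(w)\ne 1$, which exists because $\chi$ is nontrivial. Put $S=\sum_z\chi(z)$. Since $z\mapsto wz$ is a bijection of the group $\mathbb{F}^*_{p^m}$ onto itself, multiplicativity gives
$$\chi(w)S=\sum_z\chi(wz)=S.$$
Hence $(\chi(w)-1)S=0$. Because $\Bbbk$ is a field and $\chi(w)-1\neq0$, this forces $S=0$.

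There is no real obstacle here; the only point worth checking is that the argument never divides by the group order. The case $\varphi=\psi$ does not need $p^m-1$ to be invertible, since the identity is stated with $p^m-1$ as an element of $\Bbbk$. Thus the proof is valid in every characteristic $r$ of $\Bbbk$, including when $r\mid p^m-1$, where both sides are $0$ whenever $\varphi=\psi$. This is exactly what is needed for the later application in cross characteristic.
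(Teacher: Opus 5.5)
Your proof is correct. The paper gives no proof of this proposition and only cites it as a well-known fact; your reduction to $\chi=\varphi\psi^{-1}$ and the translation argument $\chi(w)S=S$ with $\chi(w)\neq 1$ supply the standard justification. You also rightly note that the argument never inverts $p^m-1$, so it holds in every characteristic $r$ of $\Bbbk$, which is what the paper needs in cross characteristic.
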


\begin{lemma} \label{infiniteX}
 Let $p,r$ be different  prime numbers such that $p \not\equiv 1  \pmod r $.
One has that

\noindent (i) The set $X=\{ m\in \mathbb{N} \mid r \nmid p^m-1\}$ is infinite.

\noindent (ii)  For any $k\in \mathbb{N}$, there exist  $m, l\geq k$ such that $p^m \not\equiv p^{l}  \pmod r$.

\end{lemma}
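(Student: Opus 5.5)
We work with the order of $p$ in the multiplicative group $(\mathbb{Z}/r\mathbb{Z})^*$. Since $p$ and $r$ are distinct primes, $r\nmid p$, so $p$ defines a class in $(\mathbb{Z}/r\mathbb{Z})^*$. Let $d\ge 1$ be its order. The hypothesis $p\not\equiv 1 \pmod r$ says exactly that $d\ge 2$. Note that $d$ divides $r-1$, so in particular $r$ is odd here.

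For (i), we have $p^m\equiv 1\pmod r$ if and only if $d\mid m$. Hence
\[
X=\{m\in\mathbb{N}\mid d\nmid m\},
\]
which contains every $m\equiv 1\pmod d$. Because $d\ge 2$, this is an infinite arithmetic progression. Alternatively, one can avoid the order: if $r\mid p^m-1$ and $r\mid p^{m+1}-1$, then
\[
r\mid p^{m+1}-1-(p^m-1)=p^m(p-1).
\]
Since $r\nmid p$, this forces $r\mid p-1$, a contradiction. So among any two consecutive integers $m,m+1$, at least one lies in $X$, and $X$ is infinite.

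For (ii), given $k$, take $m=k$ and $l=k+1$. Suppose $p^k\equiv p^{k+1}\pmod r$. Then $r\mid p^k(p-1)$, and since $r\nmid p$ this gives $r\mid p-1$, contradicting the hypothesis. Hence $p^m\not\equiv p^l \pmod r$ with $m,l\ge k$.

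Both parts are therefore elementary. The only point needing care is that $p$ is invertible modulo $r$, so that $p^k$ can be cancelled; this holds because $r\neq p$.
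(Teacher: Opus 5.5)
Your argument is correct. Part (i) coincides with the paper's proof: the order $d\ge 2$ of $p$ modulo $r$ gives $X=\{m : d\nmid m\}$. Your order-free variant via consecutive exponents is also valid.

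For (ii) you choose different witnesses than the paper. You take $(m,l)=(k,k+1)$ and use $p^{k+1}-p^k=p^k(p-1)$, so (ii) follows directly from $p\not\equiv 1 \pmod r$, independently of (i). The paper instead picks $m\in X$ with $m\ge k$ and sets $l=2m$. Then $p^l-p^m=p^m(p^m-1)\not\equiv 0 \pmod r$, so (ii) is deduced from (i).

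Both prove the statement as written. The paper's pair, however, has the additional property $m\mid l$. That is exactly the form in which the lemma is invoked in the proof of Lemma \ref{keylemma}, where one needs $\mathbb{F}_{p^m}\subseteq\mathbb{F}_{p^l}$ to compare the two character sums. Your consecutive exponents satisfy $m\mid l$ only when $k=1$. So your version of (ii), though correct, would not supply what that later argument uses.
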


\begin{proof} (i) Let $d$ be the order of $p$ in $\mathbb{F}_{r}= \mathbb{Z}/r\mathbb{Z}$, i.e., $p^d \equiv 1  \pmod r$.
Noting that  $p \not\equiv 1 \pmod r$, we see that $d\geq 2$.
Thus $X= \{ m\in \mathbb{N} \mid d \nmid m\}$, which is infinite and (i) is proved.

\noindent (ii)  Since $X$ is infinite,  for any $k\in \mathbb{N}$, let $m \in X$ and $m\geq k$.  We set $l=2m$, then
 $$p^{2m}-p^m=p^{m}(p^{m}-1)  \not\equiv 0  \pmod r,$$
 which proves (ii).
\end{proof}

\begin{proof}  [{Proof of Lemma \ref{keylemma}}]
Since  $\lambda$ is nontrivial,  we set  $\lambda (0)=0$ for convenience. Let
 $\displaystyle S(x)= \sum_{i=1}^n  a_i \lambda (x+u_i).$
 Denote $Z=\{x\in \bar{\mathbb{F}}^*_p\mid  S(x)\ne 0\}$. Suppose that the set $Z$ is finite. There exists an integer $m$, such that  $\mathbb{F}_{p^m} \supseteq Z\cup \{u_1, u_2, \cdots, u_n\}$ and $\lambda|_{ {\mathbb{F}}^*_{p^m}}$ is a nontrivial  multiplicative character of
${\mathbb{F}}^*_{p^m}$.

Firstly we consider the case $p \not \equiv 1  \pmod r $.  Using Proposition \ref{Orth}, for any $u\in  \mathbb{F}^*_{p^m}$, we see that
 $$\sum_{z\in  \mathbb{F}^*_{p^m}} \lambda(z+u) \lambda(z^{-1})=\sum_{z\in  \mathbb{F}^*_{p^m}} \lambda(1+uz^{-1})=-1.  $$
 Therefore it is not difficult to see that
 \begin{align}\label{eq3.1}
\sum_{x\in  \mathbb{F}_{p^m}\setminus  \{-u_1\} } S(x) \lambda((x+u_1)^{-1})=p^m a_1- \sum_{i=1}^n a_i.
 \end{align}
 By  Lemma \ref{infiniteX}, there exists an integer $l$  such that $m \mid l$ and $p^m \not\equiv p^l  \pmod r$.
  Since  $\mathbb{F}_{p^l} \supseteq \mathbb{F}_{p^m} \supseteq Z\cup \{u_1, u_2, \cdots, u_n\}$,  we see that
  $$\sum_{x\in  \mathbb{F}_{p^m}\setminus  \{-u_1\} } S(x) \lambda((x+u_1)^{-1})= \sum_{x\in  \mathbb{F}_{p^l}\setminus  \{-u_1\} } S(x) \lambda((x+u_1)^{-1}),$$
  which implies that $p^m a_1 =p^l a_1$ by (\ref{eq3.1}). Noting that $p^m \not\equiv p^l  \pmod r$, we get a contradiction.
Thus Lemma \ref{keylemma} is proved in the case $p \not \equiv 1  \pmod r $.

Now we consider the case $p  \equiv 1  \pmod r $.  Note that $S(x) =0$ for any $x\in \overline{\mathbb{F}}_p\setminus\mathbb{F}_{p^m} $ by the assumption.  So we get
\begin{align}\label{eq3.2}
S(xy)= \sum_{i=1}^n  a_i \lambda (xy+u_i)=0
\end{align}
for any $x\in \overline{\mathbb{F}}_p\setminus\mathbb{F}_{p^m} $ and $y\in \mathbb{F}^*_{p^m}$, which implies that $\displaystyle \sum_{i=1}^n  a_i \lambda (x+u_i y^{-1})=0$.
Therefore we have
$$ \sum_{y\in \mathbb{F}^*_{p^m}} \sum_{i=1}^n  a_i \lambda (x+u_i y^{-1})=(\sum_{i=1}^n a_i) \sum_{z\in \mathbb{F}^*_{p^m}} \lambda(x+z)$$
for any $x\in \overline{\mathbb{F}}_p\setminus\mathbb{F}_{p^m}$.  If $\displaystyle \sum_{i=1}^n a_i \ne 0$, then $\displaystyle \sum_{z\in \mathbb{F}^*_{p^m}} \lambda(x+z)=0$ for any $x\in \overline{\mathbb{F}}_p\setminus\mathbb{F}_{p^m}$.
If $\displaystyle \sum_{i=1}^n a_i =0$, then we consider $S(x-u_1)$ instead of $S(x)$ and by (\ref{eq3.2}), we get
$$ a_1 \lambda(x)+ \sum_{i=2}^n a_i \lambda(x+ (u_i-u_1) y^{-1})=0$$
for any $x\in \overline{\mathbb{F}}_p\setminus\mathbb{F}_{p^m} $ and $y\in \mathbb{F}^*_{p^m}$.
Noting that  $p  \equiv 1  \pmod r $, thus we have
$$ \sum_{y\in  \mathbb{F}^*_{p^m} } \big(a_1 \lambda(x)+ \sum_{i=2}^n a_i \lambda(x+ (u_i-u_1) y^{-1}) \big)=(\sum_{i=2}^n a_i) \sum_{z\in \mathbb{F}^*_{p^m}} \lambda(x+z)=0$$
by Proposition \ref{Orth}. Since $\displaystyle \sum_{i=2}^n a_i \ne 0$,  we also have   $\displaystyle \sum_{z\in \mathbb{F}^*_{p^m}} \lambda(x+z)=0$ for any $x\in \overline{\mathbb{F}}_p\setminus\mathbb{F}_{p^m}$.

Denote  $R_m(x)= \displaystyle \sum_{z\in \mathbb{F}_{p^m}} \lambda(x+z)$ and thus
\[
R_m(x)=
\begin{cases}
0, & x \in   \mathbb{F}_{p^m}\\
\lambda(x), & x \in \overline{\mathbb{F}}_p\setminus\mathbb{F}_{p^m}.
\end{cases}
\]
Let $l\in \mathbb{N}$ such that $m | l$ and $l>m$, we also have
\[
R_l(x)= \sum_{z\in \mathbb{F}_{p^l}} \lambda(x+z)=
\begin{cases}
0, & x \in   \mathbb{F}_{p^l}\\
\lambda(x), & x \in \overline{\mathbb{F}}_p\setminus\mathbb{F}_{p^l}.
\end{cases}
\]
Let $\{0, z_1, z_2, \cdots, z_r\}$ be the left coset representatives of $\mathbb{F}_{p^m}$ in $\mathbb{F}_{p^l}$. Thus
$$R_l(0) = R_m(0)+\sum_{j=1}^r R_m(z_j)=\sum_{j=1}^r \lambda(z_j)  =0.$$
Therefore we get  $\lambda(z_j+ u)=\lambda(z_j)$ and thus $\lambda(1+uz^{-1}_j)=1$ for any $u\in \mathbb{F}_{p^m}$ and $j=1,2,\cdots, r$. So it is not difficult to see that $\lambda(z)=1$ for any $z\in \mathbb{F}_{p^l} \setminus \mathbb{F}_{p^m}$, which contradicts the fact that $\lambda|_{ {\mathbb{F}}^*_{p^m}}$ is  nontrivial.
Therefore  Lemma \ref{keylemma} is proved.
\end{proof}

\section{The nontrivial character case}

In this section, we consider the case that $\theta$ is a nontrivial character.  Let $\varphi_e: \op{Ind}_{\bf T}^{\bf G} \Bbbk_\theta \rightarrow \mathbb{M}(\theta)$ be the natural morphism such that $\varphi_e ({\bf 1}_{\theta})= {\widehat{\bf 1}}_{\theta}$.  Let $\varphi_s: \op{Ind}_{\bf T}^{\bf G} \Bbbk_\theta \rightarrow \mathbb{M}(\theta^s) $ be the morphism such that $\varphi_s ({\bf 1}_{\theta})=s{\widehat{\bf 1}}_{\theta^s}$. Both $\mathbb{M}(\theta)$ and $\mathbb{M}(\theta^s)$ are the simple quotient modules of $\op{Ind}_{\bf T}^{\bf G}\Bbbk_{\theta}$. We will show that  the induced module $\op{Ind}_{\bf T}^{\bf G} \Bbbk_\theta$   has only two  simple  quotient modules  $\mathbb{M}(\theta)$ and $\mathbb{M}(\theta^s)$.

\begin{lemma} \label{nontrReplem}
Let $M$ be a $\Bbbk {\bf G}$-submodule of $ \op{Ind}_{\bf T} ^{\bf G} \Bbbk_{\theta}$. If there exists $n\in \mathbb{N}$ such that
$$\zeta=\sum_{z\in \mathbb{F}^*_{p^n} }  c_z   \overline{U_{p^n}} s  \varepsilon(z) {\bf 1}_{\theta} \in M,  \quad c_z\in \Bbbk,$$
where  $\displaystyle \sum_{z\in \mathbb{F}^*_{p^n} }  c_z  \ne 0$ and  $\displaystyle \sum_{z\in \mathbb{F}^*_{p^n} }  c_z \theta^s(-z) \ne 0$.
Then we have $\Bbbk {\bf G} \zeta=  \op{Ind}_{\bf T} ^{\bf G} \Bbbk_{\theta}$.
\end{lemma}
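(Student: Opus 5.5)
The plan is to mimic the proof of Lemma \ref{trReplem}. We use the torus action to split $\zeta$ into a piece lying in the span of $\overline{U_{p^n}}{\bf 1}_\theta$ and $\overline{U_{p^n}}s{\bf 1}_\theta$. Then we invoke the analogue of \cite[Lemma 3.6]{CD2} (that $\Bbbk{\bf G}\,\overline{U_{p^n}}v$ recovers $v$) to conclude that ${\bf 1}_\theta\in\Bbbk{\bf G}\zeta$.

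Concretely, let $V$ be the span of $\overline{U_{p^n}}{\bf 1}_\theta$, $\overline{U_{p^n}}s{\bf 1}_\theta$ and $\overline{U_{p^n}}s\varepsilon(z){\bf 1}_\theta$ for $z\in\mathbb{F}^*_{p^n}$. This space is stable under the elements $h(c)$ with $\alpha(h(c))\in\mathbb{F}^*_{p^n}$. Using $h(c)s\varepsilon(z){\bf 1}_\theta=s\,\varepsilon(c^{-2}z)\,\theta^s(h(c)){\bf 1}_\theta$, the vectors $\overline{U_{p^n}}s\varepsilon(z){\bf 1}_\theta$ are permuted up to the scalars $\theta^s$.

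First, average over $\mathfrak{D}_{p^n}$ twisted by $\theta^s$, as in (\ref{eq2.1}). Since $\sum c_z\ne 0$, this puts $\overline{U_{p^n}}s\overline{U^*_{p^n}}{\bf 1}_\theta$ (suitably weighted) into $M$. When $r\mid p^n-1$ and averaging is unavailable, we would instead take the component in the appropriate generalized eigenspace (Jordan block), exactly as in the second half of the proof of Lemma \ref{trReplem}.

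Next, apply $\overline{U_{p^n}}s$ to $\zeta$. Using $s\varepsilon(x)s=\varepsilon(-x^{-1})sh(-x)\varepsilon(-x^{-1})$, each term $\overline{U_{p^n}}s\,\overline{U_{p^n}}s\varepsilon(z){\bf 1}_\theta$ expands into three kinds of terms. The $x=0$ summand contributes $\overline{U_{p^n}}s^2\varepsilon(z){\bf 1}_\theta=\theta(h(-1))\overline{U_{p^n}}{\bf 1}_\theta$. The remaining summands contribute $\overline{U_{p^n}}s\varepsilon(w){\bf 1}_\theta$ terms, and a term $\overline{U_{p^n}}s{\bf 1}_\theta$ with coefficient $\theta^s(-z)$-type arising when $z=x^{-1}$. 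This gives
$$\overline{U_{p^n}}s\zeta=A\,\overline{U_{p^n}}{\bf 1}_\theta+B\,\overline{U_{p^n}}s{\bf 1}_\theta+\sum_{w}b_w\overline{U_{p^n}}s\varepsilon(w){\bf 1}_\theta,$$
with $A$ a nonzero multiple of $\sum c_z$ and $B$ a nonzero multiple of $\sum c_z\theta^s(-z)$.

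The key point is that $\overline{U_{p^n}}{\bf 1}_\theta$ and $\overline{U_{p^n}}s{\bf 1}_\theta$ lie in different torus eigenspaces: under $h(c)$ they scale by $\theta(h(c))$ and $\theta^s(h(c))$ respectively. Since $\theta\ne\theta^s$ when $\theta$ is nontrivial (on a large enough finite subtorus, after enlarging $n$), projecting onto isotypic components separates them. Projecting onto the $\theta$-component kills the $s{\bf 1}$ term and most of the sum, and leaves $A\overline{U_{p^n}}{\bf 1}_\theta$ plus a controlled multiple of an element already in $M$; this gives $\overline{U_{p^n}}{\bf 1}_\theta\in M$. Similarly, using $B\neq0$, the $\theta^s$-component gives $\overline{U_{p^n}}s{\bf 1}_\theta\in M$. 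Then \cite[Lemma 3.6]{CD2} (or its analogue for $\theta$) yields ${\bf 1}_\theta\in M$, hence $\Bbbk{\bf G}\zeta=\op{Ind}_{\bf T}^{\bf G}\Bbbk_\theta$.

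The main obstacle is the modular case $r\mid p^n-1$, where the projections onto isotypic components are not defined. There I would follow the Jordan-block argument of Lemma \ref{trReplem}, taking components in generalized eigenspaces and using that $\overline{U_{p^n}}s$ maps the $\lambda$-generalized eigenspace to the $\lambda^{-1}$-twisted one. Alternatively, I would enlarge $n$ to some $n'$ with $r\nmid p^{n'}-1$ when possible, which is available by Lemma \ref{infiniteX} if $p\not\equiv1\pmod r$. One also needs to check that the coefficient bookkeeping leaves exactly $\sum c_z$ and $\sum c_z\theta^s(-z)$ as the surviving scalars.
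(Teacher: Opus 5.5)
For $r\nmid p^n-1$ your argument is essentially the paper's. The $\theta^s$-twisted sum over $\mathfrak{D}_{p^n}$ puts $\mathbf{v}_1:=\overline{U_{p^n}}s\overline{U^*_{p^n}}{\bf 1}_\theta$ into $\Bbbk{\bf G}\zeta$ since $C:=\sum_z c_z\neq0$. The $\theta^s$-eigencomponent of $\overline{U_{p^n}}s\zeta$ (see (\ref{eq4.1})) is then $D\,\overline{U_{p^n}}s{\bf 1}_\theta+b\,\mathbf{v}_1$, with $D:=\sum_z c_z\theta^s(-z)\neq0$. Your parallel $\theta$-projection is not justified as written: the other vector in the $\theta$-eigenspace is $\mathbf{w}_1:=\overline{U_{p^n}}s\sum_{x\in\mathbb{F}^*_{p^n}}\theta(x)\varepsilon(x){\bf 1}_\theta$, which your first step does not produce. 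You don't need that route there, though.

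The genuine gap is the case $r\mid p^n-1$, where most of the work lies, and neither of your suggestions handles it. Enlarging $n$ cannot help. You may only pass to multiples $n'$ of $n$ (that is what keeps $\overline{U_{p^{n'}}}\zeta$ of the required form), and then $r\mid p^n-1\mid p^{n'}-1$, so Lemma \ref{infiniteX} is irrelevant.

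Nor does the Jordan-block argument of Lemma \ref{trReplem} carry over verbatim. Let $\mathcal{A}$ be the action of $h(\omega^{1/2})$, and let $\kappa,\kappa^{-1}$ be its eigenvalues on $\overline{U_{p^n}}s{\bf 1}_\theta$ and $\overline{U_{p^n}}{\bf 1}_\theta$. In Lemma \ref{trReplem}, $\overline{U_{p^n}}s$ maps the single block $J_m(1)$ into itself plus $\Bbbk\mathbf{x}+\Bbbk\mathbf{y}$. Here it interchanges $J_m(\kappa)$ (Jordan basis $\mathbf{v}_i$) and $J_m(\kappa^{-1})$ (Jordan basis $\mathbf{w}_j$):
$\overline{U_{p^n}}s\mathbf{v}_i\in\Bbbk\overline{U_{p^n}}{\bf 1}_\theta+\op{span}\{\mathbf{w}_1,\dots,\mathbf{w}_i\}$ and $\overline{U_{p^n}}s\mathbf{w}_j\in\Bbbk\overline{U_{p^n}}s{\bf 1}_\theta+\op{span}\{\mathbf{v}_1,\dots,\mathbf{v}_j\}$.
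Suppose $\zeta$ has components $\sum_{i\le k}a_i\mathbf{v}_i$ and $\sum_{j\le l}b_j\mathbf{w}_j$ in these blocks. Your $\theta^s$-route yields $D\,\overline{U_{p^n}}s{\bf 1}_\theta+\sum_{j\le l}b'_j\mathbf{v}_j\in M$, but you control only $\mathbf{v}_1,\dots,\mathbf{v}_k$, so for $l>k$ it stalls. The twisted sum needs no division and works in every characteristic, but it yields only $\mathbf{v}_1$. By (\ref{eq4.1}), the coefficient of $\overline{U_{p^n}}{\bf 1}_\theta$ in $\overline{U_{p^n}}s\mathbf{v}_1$ is $\theta(-1)(p^n-1)=0$, so $\mathbf{v}_1$ alone is useless here.

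Two ideas are missing. First, each hypothesis must be used twice. On the span of the $\overline{U_{p^n}}s\varepsilon(z){\bf 1}_\theta$, $z\neq0$, consider the functionals $\phi\colon\sum_z c'_z\overline{U_{p^n}}s\varepsilon(z){\bf 1}_\theta\mapsto\sum_z c'_z$ and $\psi\colon\sum_z c'_z\overline{U_{p^n}}s\varepsilon(z){\bf 1}_\theta\mapsto\sum_z c'_z\theta^s(z)$. They satisfy $\phi\circ\mathcal{A}=\kappa\,\phi$ and $\psi\circ\mathcal{A}=\kappa^{-1}\psi$, so they vanish on all other generalized eigenspaces. Hence $C\neq0$ forces $a_k\neq0$, and $D\neq0$ (equivalently $\psi(\zeta)\neq0$) forces $b_l\neq0$. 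Applying polynomials in $\mathcal{A}$ then gives $\mathbf{v}_1,\dots,\mathbf{v}_k,\mathbf{w}_1,\dots,\mathbf{w}_l\in M$. This is also what supplies the \emph{element already in $M$} in your $\theta$-projection: it is $\mathbf{w}_1$, and it comes from $D\neq0$, not from $C\neq0$.

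Second, compare depths. The two relevant generalized-eigenspace components of $\overline{U_{p^n}}s\zeta$ are $\theta(-1)C\,\overline{U_{p^n}}{\bf 1}_\theta+\sum_{i\le k}a'_i\mathbf{w}_i$ and $D\,\overline{U_{p^n}}s{\bf 1}_\theta+\sum_{j\le l}b'_j\mathbf{v}_j$. If $k\ge l$, the second gives $\overline{U_{p^n}}s{\bf 1}_\theta\in M$; if $k\le l$, the first gives $\overline{U_{p^n}}{\bf 1}_\theta\in M$. Either suffices for \cite[Lemma 3.6]{CD2}. In general the argument produces only one of the two vectors, not both as you claim.
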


\begin{proof} Without loss of generality, we assume that $\theta |_{\mathbb{F}^*_{p^n}} \ne \theta^s|_{\mathbb{F}^*_{p^n}}$. Denote $C= \displaystyle \sum_{z\in \mathbb{F}^*_{p^n} }  c_z$ and $D= \displaystyle \sum_{z\in \mathbb{F}^*_{p^n} }  c_z \theta^s(-z)$.
By some easy computation,  we see that 
\begin{align}\label{eq4.1}
\overline{U_{p^n}} s \zeta= \theta(-1) C  \overline{U_{p^n} } {\bf 1}_{\theta}+D \overline{U_{p^n} }s {\bf 1}_{\theta} +   \sum_{z\in \mathbb{F}^*_{p^n} }  d_z   \overline{U_{p^n}} s  \varepsilon(z) {\bf 1}_{\theta}
\end{align}
where $d_z\in \Bbbk$.   Let $V$  be the space spanned by$\{  \overline{U_{p^n} } {\bf 1}_{\theta}, \overline{U_{p^n}} s  \varepsilon(z){\bf 1}_{\theta}\mid  z\in \mathbb{F}_{p^n}\}$.
Let $\omega$ be the generator of the cyclic group $\mathbb{F}^*_{p^n}$.  Denote by $\mathcal{A}:  V \rightarrow V$ the linear map introduced  by the action of $h(\omega^{\frac{1}{2}})$ on $V$. Let $\kappa =\theta^s(\omega^{-\frac{1}{2}})$. We see that $\kappa\ne \kappa^{-1}$ and $\kappa^{2(p^n-1)}=1$.  Then the minimal polynomial of $\mathcal{A}$ is $t^{p^n-1}-\kappa^{p^n-1}$.  When $r \nmid p^n-1$, then the linear map $\mathcal{A}$ is diagonalizable.
The eigenspace corresponding to eigenvalue $\kappa $  is spanned by  $\overline{U_{p^n}} s{\bf 1}_{\theta}$  and $\overline{U_{p^n}} s \overline{U^*_{p^n}}{\bf 1}_{\theta}$. Take the component of  $\zeta$ and $\overline{U_{p^n}} s \zeta$ in the eigenspace for eigenvalue 1. We see that  $\overline{U_{p^n}} s{\bf 1}_{\theta}\in \Bbbk {\bf G} \zeta$.  So using {\cite[Lemma 3.6]{CD2}},  we have ${\bf 1}_{\theta} \in  \Bbbk {\bf G}\zeta$ and thus $\Bbbk {\bf G}\zeta=\op{Ind}_{\bf T}^{\bf G} \Bbbk_\theta$.

 Now we consider the case $r| p^n-1$.  Write $p^n-1= ml$, where $m$ is a power of $r$ and $(l,r)=1$. Then the Jordan canonical form of $\mathcal{A}$ is $$J_1(\kappa^{-1}) \dot{+} J_1(\kappa) \dot{+} J_{m}(\kappa \lambda_1) \dot{+} J_{m}(\kappa\lambda_2) \dot{+} \cdots \dot{+} J_{m}(\kappa\lambda_l),$$
where $\lambda_1=1, \lambda_2, \cdots, \lambda_l$ are the roots of the equation $t^l-1=0$.  Let $\mathbf{u}_1, \mathbf{u}_2, \cdots, \mathbf{u}_m$ be a Jordan basis corresponding to the block $J_m(\lambda)$, where $(\mathcal{A}- \lambda \mathcal{E}) \mathbf{u}_{i+1}= \mathbf{u}_i$. Set $ \mathbf{u}'_i = \overline{U_{p^n}}  s  \mathbf{u}_i \in V$. Then
$$ \mathcal{A} \mathbf{u}'_{i+1}=  \overline{U_{p^n}}  s  \mathcal{A}^{-1} \mathbf{u}_{i+1} = \lambda^{-1}\overline{U_{p^n}}  s (  \mathbf{u}_{i+1} -  \mathcal{A}^{-1}\mathbf{u}_{i} )=\lambda^{-1}( \mathbf{u}'_{i+1}- \mathcal{A} \mathbf{u}'_{i}).$$
Thus  $(  \mathcal{A}- \lambda^{-1}  \mathcal{E})  \mathbf{u}'_{i+1}= - \lambda^{-1}\mathcal{A} \mathbf{u}'_{i}$.
Therefore we have 
\begin{align} \label{eq4.4}
\mathbf{u}'_{i}= \overline{U_{p^n}}  s  \mathbf{u}_i \in \ker{(\mathcal{A}- \lambda^{-1}  \mathcal{E})^{i} }
\end{align} by induction on $i$.  In particular, we see that
\begin{align}\label{eq4.5}
\overline{U_{p^n}} s:  \ker{(\mathcal{A}- \lambda  \mathcal{E})^{m}} \rightarrow  \ker{(\mathcal{A}- \lambda^{-1} \mathcal{E})^{m}}
\end{align}
is a linear operator by (\ref{eq4.4}).

Let  $\mathbf{v}_1, \mathbf{v}_2, \cdots, \mathbf{v}_m$ be a Jordan basis corresponding to the block $J_m(\kappa)$, where $(\mathcal{A}-\kappa \mathcal{E}) \mathbf{v}_{i+1}= \mathbf{v}_i$. We have $\mathbf{v}_1=\overline{U_{p^n}} s \overline{U^*_{p^n}}{\bf 1}_{\theta}$.   Let  $\mathbf{w}_1, \mathbf{w}_2, \cdots, \mathbf{w}_m$ be a Jordan basis corresponding to the block $J_m(\kappa^{-1})$, where $(\mathcal{A}-\kappa^{-1} \mathcal{E}) \mathbf{w}_{i+1}= \mathbf{w}_i$ and $\mathbf{w}_1=\displaystyle \overline{U_{p^n}} s \sum_{x\in  \mathbb{F}^*_{p^n}} \theta(x)\varepsilon(x){\bf 1}_{\theta}$. Let  $ \mathbf{v}'_i= \overline{U_{p^n}} s \mathbf{v}_i$ and $ \mathbf{w}'_i= \overline{U_{p^n}} s \mathbf{w}_i$. Denote $ \mathbf{x}=\overline{U_{p^n} } {\bf 1}_{\theta}$ and $ \mathbf{y}=\overline{U_{p^n} }s {\bf 1}_{\theta}$. Thus for any $k=1,2,\cdots, m$, we see that $\mathbf{v}'_k$
is a combination of  $ \mathbf{x}, \mathbf{w}_1, \mathbf{w}_2, \cdots, \mathbf{w}_k$ and  $\mathbf{w}'_k$
is a combination of  $ \mathbf{y}, \mathbf{v}_1, \mathbf{v}_2, \cdots, \mathbf{v}_k$.

Taking the direct summand  of  $\zeta $ in the Jordan block $J_m(\kappa) \dot{+} J_m(\kappa^{-1})$, we get an element
$$\zeta'= a_1 \mathbf{v}_1 + \cdots  + a_k\mathbf{v}_k +b_1 \mathbf{w}_1 + \cdots  + b_l\mathbf{w}_l\in \Bbbk {\bf G} \zeta, $$
where $a_k\ne 0$ and $b_l\ne 0$.  Thus we see that $ \mathbf{v}_k, \mathbf{w}_l\in \Bbbk {\bf G} \zeta$.
Take the direct summand  of  $\overline{U_{p^n}} s  \zeta $ in the Jordan block $J_1(\kappa^{-1}) \dot{+} J_1(\kappa) \dot{+} J_m(\kappa^{-1}) \dot{+} J_m(\kappa)$.  Using (\ref{eq4.1}) and  (\ref{eq4.5}),  we get an element
$$\zeta''=\theta(-1) C  \overline{U_{p^n} } {\bf 1}_{\theta}+D \overline{U_{p^n} }s {\bf 1}_{\theta} +\sum_{i=1}^k a'_i \mathbf{w}_i + \sum_{j=1}^l b'_j\mathbf{v}_j\in \Bbbk {\bf G} \zeta.$$
Thus we have
$$ \theta(-1) C  \overline{U_{p^n} } {\bf 1}_{\theta} +  \sum_{i=1}^k a'_i \mathbf{w}_i  \in \Bbbk {\bf G} \zeta, \quad  D \overline{U_{p^n} }s {\bf 1}_{\theta}+\sum_{j=1}^l b'_j\mathbf{v}_j   \in \Bbbk {\bf G} \zeta.$$
If $k\geq l$, since $ \mathbf{v}_k \in \Bbbk {\bf G} \zeta$,  we get $\overline{U_{p^n} }s {\bf 1}_{\theta}\in  \Bbbk {\bf G} \zeta$. If $k\leq l$, we see that $\overline{U_{p^n} }{\bf 1}_{\theta}\in  \Bbbk {\bf G} \zeta$ using $ \mathbf{w}_l \in \Bbbk {\bf G} \zeta$.  In both cases,  we have ${\bf 1}_{\theta} \in  \Bbbk {\bf G}\zeta$ by  {\cite[Lemma 3.6]{CD2}}  and thus $\Bbbk {\bf G}\zeta=\op{Ind}_{\bf T}^{\bf G} \Bbbk_\theta$. The lemma is proved.
\end{proof}

Let $\xi$ be an element such that  $\xi \notin  \ker \varphi_e$ and  $\xi \notin  \ker \varphi_s$. We will show that ${\Bbbk {\bf G}} \xi= \op{Ind}_{\bf T}^{\bf G}  \Bbbk_\theta  $. Thus $\op{Ind}_{\bf T}^{\bf G} \Bbbk_\theta$  has no other simple quotient modules except $\mathbb{M}(\theta)$ and $\mathbb{M}(\theta^s)$.  Denote
$$\xi= \sum_{z\in \bar{\mathbb{F}}_p}a_z \varepsilon(z){\bf 1}_{\theta}+ \sum_{x, y\in \bar{\mathbb{F}}_p} b_{x,y} \varepsilon(x)s\varepsilon(y){\bf 1}_{\theta}.$$
It is easy to see that there exists an element  $u\in  \bar{\mathbb{F}}_p$ such that $s \varepsilon(u) \xi$ has the following form
$$\eta= s \varepsilon(u) \xi= \sum_{x\in \bar{\mathbb{F}}_p,  y\in \bar{\mathbb{F}}^*_p } f_{x,y} \varepsilon(x)s\varepsilon(y){\bf 1}_{\theta} \in M.$$
Since $\xi \notin  \ker \varphi_e$ and  $\xi \notin  \ker \varphi_s$,  we also have $\eta \notin  \ker \varphi_e$ and  $\eta \notin  \ker \varphi_s$.
Denote $f_x =\displaystyle \sum_{ y\in \bar{\mathbb{F}}^*_p} f_{x,y}$.
Using $\eta \notin  \ker \varphi_e$, we see that $f_{x}\ne 0$ for some $x$.  Note that
\begin{align}\label{eq4.8}
\varphi_s(\eta) =  \sum_{x\in \bar{\mathbb{F}}_p,  y\in \bar{\mathbb{F}}^*_p } f_{x,y} \theta^s(-y)  \varepsilon(x-y^{-1}) s {\widehat{\bf 1}}_{\theta^s},
\end{align}
by some easy computation. 
For any $z  \in  \bar{\mathbb{F}}_p$, set  $ \Xi_z= \{(x, y)\mid x-y^{-1} =z\}.$ Since $\eta \notin  \ker \varphi_s$ and using (\ref{eq4.8}), we get
\begin{align}\label{eq4.2}
\sum_{(x,y) \in \Xi_{z} } f_{x,y} \theta^s(-y)\ne 0
\end{align}
for some $z\in \bar{\mathbb{F}}_p$.

Let $v\in  \bar{\mathbb{F}}^*_p$ such that $v+x \ne 0$ when $f_{x,y}\ne 0$ and
we consider
\begin{align}\label{eq4.3}
s \varepsilon(v)\eta= \sum_{x,y\in \bar{\mathbb{F}}_p} f_{x,y}\theta(-(v+x)) \varepsilon(-(v+x)^{-1})s \varepsilon((v+x)^2y-(v+x))
{{\bf 1}}_{\theta}.
\end{align}
There exists an integer $l$ such that for any $v\in  \bar{\mathbb{F}}_p \setminus \mathbb{F}_{p^l}$,   the elements $(v+x)^2y-(v+x)$ are nonzero and different from each other.  By Lemma \ref{keylemma}, we can choose some element $v_1 \in    \mathbb{F}_{p^n} \setminus  \mathbb{F}_{p^l}$ such that
$$f:=\sum_{x\in \bar{\mathbb{F}}_p} f_{x}\theta(h(-(v_1+x))) \ne 0.$$
Thus we get  
\begin{align}\label{eq4.6}
 \overline{U_{ p^n}} s \varepsilon(v_1)\eta =   \sum_{z\in \mathbb{F}^*_{p^n}} a_z \overline{U_{ p^n}} s  \varepsilon(z){{\bf 1}}_{\theta},  \quad \text{where} \ \sum_{z\in \mathbb{F}^*_{p^n}} a_z= f\ne 0.
\end{align}
In the expression (\ref{eq4.3}) of $s \varepsilon(v)\eta$, we consider
$$ \sum_{x,y\in \bar{\mathbb{F}}_p} f_{x,y} \theta(-(v+x)) \theta^s(-( (v+x)^2y-(v+x)))= \sum_{x,y\in \bar{\mathbb{F}}_p} f_{x,y} \theta^s( (v+x)y-1).$$
Using (\ref{eq4.2}), let $\sigma_1, \sigma_2, \dots, \sigma_r\in \bar{\mathbb{F}}_p$ such that  $f_i:= \displaystyle \sum_{(x,y) \in \Xi_{\sigma_i} } f_{x,y} \theta^s(-y)\ne 0$.
So we have $$ \sum_{x,y\in \bar{\mathbb{F}}_p} f_{x,y} \theta^s( (v+x)y-1)= \sum_{i=1}^r f_i \theta^s(-v-\sigma_i).$$
By Lemma \ref{keylemma}, we can choose some element $v_2 \in    \mathbb{F}_{p^n} \setminus  \mathbb{F}_{p^l}$ such that
$ \widetilde{f}:=\displaystyle \sum_{i=1}^r f_i \theta^s(-v_2-\sigma_i)\ne 0$. So we have
\begin{align}\label{eq4.7}
 \overline{U_{ p^n}} s \varepsilon(v_2)\eta =   \sum_{z\in \mathbb{F}_{p^n}} b_z \overline{U_{ p^n}} s  \varepsilon(z){{\bf 1}}_{\theta}, \quad where 
\  \sum_{z\in \mathbb{F}^*_{p^n}} b_z \theta^s(-z)= \widetilde{f} \ne 0.
\end{align}
Using (\ref{eq4.6}) and (\ref{eq4.7}),  there exists an element
$\displaystyle \zeta=   \sum_{z\in \mathbb{F}^*_{p^n}} c_z \overline{U_{ p^n}} s  \varepsilon(z){{\bf 1}}_{\theta} $
such that $\displaystyle  \sum_{z\in \mathbb{F}^*_{p^n}} c_z \ne 0$ and
 $\displaystyle  \sum_{z\in \mathbb{F}^*_{p^n}} c_z \theta^s(-z)\ne 0$. By Lemma \ref{nontrReplem}, we see that $\Bbbk {\bf G} \xi=\Bbbk {\bf G} \zeta=  \op{Ind}_{\bf T} ^{\bf G} \Bbbk_{\theta}$. Therefore the induced module $\op{Ind}_{\bf T}^{\bf G} \Bbbk_\theta$   has only two  simple  quotient modules  $\mathbb{M}(\theta)$ and $\mathbb{M}(\theta^s)$.

\section{Supplementary result}

Let  $G$ be a group, $\mathbb{F}$ be a field.  The characters $\sigma_1, \sigma_2, \cdots, \sigma_n$ of $G$ are called linearly dependent if there exists $a_1, a_2, \cdots, a_n \in \mathbb{F}$, not all zero such that
$$a_1\sigma_1(x)+ a_2 \sigma_2(x)+\cdots +a_n \sigma_n(x)=0, \quad  \forall x\in G.$$
If the characters are not  linearly  dependent, they are called  linearly  independent.
The following theorem on the  linearly  independence of characters are well-known.

\begin{theorem}  \cite[Theorem 12]{A}
  If $G$ is a group and $\sigma_1, \sigma_2, \cdots, \sigma_n$ are $n$ mutually distinct characters of $G$ in a field $\mathbb{F}$,  then $\sigma_1, \sigma_2, \cdots, \sigma_n$ are  linearly  independent.
\end{theorem}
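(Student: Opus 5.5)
We prove the statement (Artin's theorem on independence of characters) by induction on $n$, arguing by contradiction with a shortest linear dependence. For $n=1$ there is nothing to prove: a character $\sigma_1\colon G\to\mathbb{F}^*$ never vanishes, so $a_1\sigma_1(x)=0$ for all $x$ forces $a_1=0$.

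Now suppose the characters are linearly dependent. Among all nontrivial dependence relations among $\sigma_1,\dots,\sigma_n$, choose one with the fewest nonzero coefficients. After relabelling and rescaling, it has the form
$$\sigma_1(x)+a_2\sigma_2(x)+\cdots+a_k\sigma_k(x)=0 \quad \text{for all } x\in G,$$
with all $a_i\ne 0$. By the case $n=1$ we have $k\ge 2$.

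Since $\sigma_1\ne\sigma_k$, we can fix $g\in G$ with $\sigma_1(g)\ne\sigma_k(g)$. We then produce two relations:
\begin{itemize}
\item replacing $x$ by $gx$ and using multiplicativity gives $\sum_i a_i\sigma_i(g)\sigma_i(x)=0$;
\item multiplying the original relation by $\sigma_1(g)$ gives $\sum_i a_i\sigma_1(g)\sigma_i(x)=0$.
\end{itemize}
Subtracting the second from the first yields
$$\sum_{i=2}^{k}a_i\bigl(\sigma_i(g)-\sigma_1(g)\bigr)\sigma_i(x)=0 \quad \text{for all } x\in G.$$
The coefficient of $\sigma_1$ has cancelled, so this relation involves at most $k-1$ of the characters. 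The coefficient of $\sigma_k$ is $a_k(\sigma_k(g)-\sigma_1(g))\neq 0$, so the relation is nontrivial. This contradicts the minimality of $k$.

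The only delicate points are that characters take values in $\mathbb{F}^*$ (used for the base case and for the cancellation), and that minimality is set up before the shift trick is applied. No results from the paper are needed beyond the definitions.
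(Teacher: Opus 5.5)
Your proof is correct. The paper gives no proof of this statement and only cites Artin's \emph{Galois Theory}; your shortest-relation argument is essentially Artin's proof there, where it is phrased as an induction on $n$, which is equivalent to choosing a relation of minimal support. One cosmetic point: you announce an induction on $n$ but never use an inductive hypothesis, since the minimality argument already handles every $n$, so that phrase can be dropped.
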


Similar to the statement of this result,  we establish the linear  independence of the translates of  nontrivial  multiplicative characters over a finite field.
As in the introduction,  $\Bbbk$ is an  algebraically closed field with $\text{char} \ \Bbbk=r$. For a  nontrivial multiplicative character $\psi: {\mathbb{F}}^*_{p^m} \rightarrow \Bbbk^*$, we set  $\psi(0)=0$.

\begin{theorem} \label{LIND}
Suppose that $r\ne p$.  Let $\psi: {\mathbb{F}}^*_{p^m} \rightarrow \Bbbk^*$ be a nontrivial multiplicative character. Given different elements  $u_1, u_2, \dots, u_n\in {\mathbb{F}}^*_{p^m}$,  let $f_i: \mathbb{F}_{p^m} \rightarrow \Bbbk $ be a map defined by $f_{i}(x)= \psi(x+u_i)$ for any $x\in \mathbb{F}_{p^m}$. Then $f_1, f_2, \cdots, f_n$ are linearly independent.
\end{theorem}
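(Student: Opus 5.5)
The plan is to use Fourier analysis on the additive group $(\mathbb{F}_{p^m},+)$ over $\Bbbk$. This is possible because $r\ne p$: $\Bbbk$ contains a primitive $p$-th root of unity, and $q:=p^m$ is invertible in $\Bbbk$. Fix a nontrivial additive character $\chi_1:\mathbb{F}_{q}\to\Bbbk^*$. The additive characters are then exactly $\chi_a(x)=\chi_1(ax)$ for $a\in\mathbb{F}_q$, and they satisfy the additive orthogonality relation $\sum_{y\in\mathbb{F}_q}\chi_a(y)=q\delta_{a,0}$, the analogue of Proposition \ref{Orth}. Suppose $\sum_{i=1}^n a_i f_i=0$ as a function on $\mathbb{F}_q$. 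We aim to show that all $a_i=0$.

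\textbf{Step 1: the Gauss sum is nonzero.} Set $G(\psi,\chi_a)=\sum_{x\in\mathbb{F}^*_q}\psi(x)\chi_a(x)$. Compute $G(\psi,\chi_1)G(\psi^{-1},\chi_1)=\sum_{x,y\ne0}\psi(x/y)\chi_1(x+y)$. Substituting $x=ty$ gives
\[
\sum_{t\ne0}\psi(t)\sum_{y\ne0}\chi_1(y(t+1)).
\]
The inner sum equals $q-1$ if $t=-1$ and $-1$ otherwise. By Proposition \ref{Orth}, the total is therefore $\psi(-1)(q-1)+\psi(-1)=\psi(-1)q$. This identity holds in $\Bbbk$ because only the orthogonality relations were used. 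Since $q\ne0$ in $\Bbbk$, we conclude $G(\psi,\chi_1)\ne0$. For $a\ne0$, the substitution $x\mapsto a^{-1}x$ gives $G(\psi,\chi_a)=\psi(a^{-1})G(\psi,\chi_1)\ne0$.

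\textbf{Step 2: transform the relation.} Multiply $\sum_i a_i\psi(x+u_i)=0$ by $\chi_a(x)$ and sum over $x\in\mathbb{F}_q$. Replacing $x$ by $x-u_i$, and using $\psi(0)=0$, gives
\[
\sum_i a_i\chi_a(-u_i)\,G(\psi,\chi_a)=0 .
\]
By Step 1, for every $a\ne0$ this yields $\sum_i a_i\chi_a(-u_i)=0$. The case $a=0$ gives no information, since $\sum_x\psi(x)=0$.

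\textbf{Step 3: Fourier inversion.} Define $g:\mathbb{F}_q\to\Bbbk$ by $g(-u_i)=a_i$ and $g=0$ elsewhere; this is well defined because the $u_i$ are distinct. By Step 2, $\hat g(a):=\sum_y g(y)\chi_a(y)$ vanishes for all $a\ne0$. Inversion with $q$ invertible gives
\[
g(y)=q^{-1}\sum_a\hat g(a)\chi_a(-y)=q^{-1}\hat g(0),
\]
so $g$ is constant. Here the hypothesis $u_i\in\mathbb{F}^*_q$ is decisive: $g(0)=0$ because no $-u_i$ equals $0$. Hence $g\equiv0$, and all $a_i=0$.

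The main obstacle is Step 1, namely showing that the Gauss sum does not vanish in positive characteristic $r$. The usual absolute-value argument is unavailable there, so the plan replaces it by the algebraic product identity $G(\psi,\chi)G(\psi^{-1},\chi)=\psi(-1)q$ together with $r\nmid q$.
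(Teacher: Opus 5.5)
Your proof is correct, but it takes a different route from the paper.

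\textbf{The paper's route.} The paper stays entirely on the multiplicative side. It tests the relation $\sum_i a_i\psi(x+u_i)=0$ against $x\mapsto\psi(x^{-1})$ and against $x\mapsto\psi((x+u_k)^{-1})$, i.e.\ against translates of $\psi^{-1}$. It then uses the single identity $\sum_{x\in\mathbb{F}^*_{p^m}}\psi(1+ux^{-1})=-1$ for $u\neq0$, which follows from Proposition~\ref{Orth}. This gives $\sum_i a_i=0$ and $p^m a_k-\sum_i a_i=0$, hence $p^m a_k=0$.

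In effect, the pairing between translates of $\psi$ and of $\psi^{-1}$ indexed by $\{0,u_1,\dots,u_n\}$ is the matrix $p^mI-J$, with $J$ the all-ones matrix. So the coefficient vector, extended by $0$ at the translate $u=0$, must be constant, hence zero. This is exactly the shape of your Step~3, where $g$ is constant and $g(0)=0$.

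\textbf{Your route.} You reach the same conclusion by diagonalizing translations with additive characters. This costs more machinery. You need nontrivial additive characters, which exist because $\Bbbk$ is algebraically closed with $r\neq p$. You also need the Gauss sum to be nonzero in characteristic $r$, and your identity $G(\psi,\chi_1)G(\psi^{-1},\chi_1)=\psi(-1)p^m$ correctly supplies this.

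In exchange you get a sharper structural picture. Since $\hat\psi(0)=0$ and $\hat\psi(a)\neq0$ for $a\neq0$, the translates of $\psi$ span precisely the zero-sum functions on $\mathbb{F}_{p^m}$. Any $p^m-1$ of them form a basis, and the hypothesis $u_i\neq0$ is just one instance of omitting a translate.

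The paper's argument is shorter and needs only multiplicative orthogonality and $p^m\neq0$ in $\Bbbk$.
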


\begin{proof}
Since $\psi$ is a nontrivial multiplicative character of  $\mathbb{F}_{p^m}$, we see that
  $\displaystyle \sum_{z\in  {\mathbb{F}}_{p^m}}\psi(z)=0$ by Lemma \ref{Orth}.  Thus for any $u\in {\mathbb{F}}^*_{p^m}$, we get
  $$ \sum_{x\in  {\mathbb{F}}^*_{p^m}}\psi(x+u) \psi(x^{-1})= \sum_{x\in  {\mathbb{F}}^*_{p^m}}\psi(1+ux^{-1})=-1.$$
Now  we consider the equation  $\displaystyle \sum_{i=1}^n a_i f_i=0$, where $a_i\in \Bbbk$.  We have
 $$\sum_{i=1}^n\sum_{x\in  \mathbb{F}^*_{p^m} } a_i f_i (x) \psi(x^{-1})=-\sum_{i=1}^n a_i=0.$$
 On the other hand, for any fixed integer $k=1, 2, \cdots, n$, we see that
$$\sum_{i=1}^n\sum_{x\in  \mathbb{F}_{p^m}\setminus  \{-u_k\} } a_i f_i (x) \psi((x+u_k)^{-1})=(p^m-1)a_k - \sum_{i\ne k} a_i=p^m a_k.$$
Noting $\text{char} \ \Bbbk  \ne p$, we get  $a_k=0$ for any  $k=1, 2, \cdots, n$. So  $f_1, f_2, \cdots, f_n$ are linearly independent.
\end{proof}

\begin{corollary} \label{Cor}
Suppose that $r\ne p$.  Let $\psi: \bar{\mathbb{F}}^*_{p} \rightarrow \Bbbk^*$ be a nontrivial multiplicative character.
 Given different elements  $u_1, u_2, \dots, u_n\in \bar{\mathbb{F}}^*_{p}$,  let $f_i: \bar{\mathbb{F}}_{p} \rightarrow \Bbbk $ be a map defined by $f_{i}(x)= \psi(x+u_i)$ for any $x\in \bar{\mathbb{F}}_{p}$. Then $f_1, f_2, \cdots, f_n$ are linearly independent.
\end{corollary}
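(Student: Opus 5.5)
The plan is to reduce Corollary \ref{Cor} to Theorem \ref{LIND} by restricting everything to a sufficiently large finite subfield. Suppose $\sum_{i=1}^n a_i f_i = 0$ as functions on $\bar{\mathbb{F}}_p$, with $a_i\in\Bbbk$; we want to show that all $a_i=0$.

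First, I choose $m$ so large that $u_1,\dots,u_n\in\mathbb{F}_{p^m}$ and the restriction $\psi|_{\mathbb{F}^*_{p^m}}$ is nontrivial. Such an $m$ exists because $\psi$ is nontrivial on $\bar{\mathbb{F}}^*_p=\bigcup_m\mathbb{F}^*_{p^m}$. Hence some $c$ with $\psi(c)\ne 1$ lies in some $\mathbb{F}_{p^{m_0}}$. Taking $m$ to be a common multiple of $m_0$ and the degrees of the fields containing the $u_i$ works, and the nontriviality persists for all multiples of $m$.

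Next, I restrict the relation to $x\in\mathbb{F}_{p^m}$. Since $x+u_i\in\mathbb{F}_{p^m}$, we have $f_i|_{\mathbb{F}_{p^m}}(x)=\psi|_{\mathbb{F}_{p^m}}(x+u_i)$. The convention $\psi(0)=0$ is compatible with the one in Theorem \ref{LIND}, since both set the value at $0$ to be $0$. So the restricted functions are exactly the translates appearing in Theorem \ref{LIND} for the nontrivial character $\psi|_{\mathbb{F}^*_{p^m}}$ and the distinct elements $u_i\in\mathbb{F}^*_{p^m}$. The relation $\sum_i a_i f_i|_{\mathbb{F}_{p^m}}=0$ then forces all $a_i=0$ by Theorem \ref{LIND}.

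The only point requiring care is the choice of $m$ guaranteeing nontriviality of the restriction. This is straightforward, so I expect no real obstacle.
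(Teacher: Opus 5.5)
Your proposal is correct and follows the paper's proof: choose $m$ with $u_1,\dots,u_n\in\mathbb{F}^*_{p^m}$ and $\psi|_{\mathbb{F}^*_{p^m}}$ nontrivial, restrict the linear relation to $\mathbb{F}_{p^m}$, and apply Theorem \ref{LIND}. Your extra remarks, on taking $m$ to be a common multiple of the relevant degrees and on the matching convention $\psi(0)=0$, fill in details the paper leaves implicit.
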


\begin{proof}
There exists $m\in \mathbb{N}$ such that  $u_1, u_2, \cdots, u_n\in \mathbb{F}^*_{p^m}$ and $\psi| \mathbb{F}^*_{p^m}$ is nontrivial. Using Theorem \ref{LIND}, the corollary is proved.
\end{proof}

\begin{remark} \normalfont
Theorem  \ref{LIND} and Corollary \ref{Cor} do not hold if  $\Bbbk=\bar{\mathbb{F}}_p$.  Indeed,  let  $q$ be a power of $p$ and  $u_1, u_2, \dots, u_q\in \bar{\mathbb{F}}^*_p$,  which are different from each other and satisfy
$\displaystyle \sum_{i=1}^q u^q_i =0.$  Let $\psi : \bar{\mathbb{F}}^*_p \rightarrow \bar{\mathbb{F}}^*_p$ be the character given by $\psi(x)=x^q$ for any $x\in \bar{\mathbb{F}}^*_p$. Then for any $x\in  \bar{\mathbb{F}}^*_p$,  we have
$$\sum_{i= 1}^q f_i (x)=\sum_{i= 1}^q   (x+u_i)^q=\sum_{i= 1}^q   (x^q+ u^q_i)=0.$$
 Thus $f_1, f_2, \cdots, f_n$ are linearly dependent.
\end{remark}

\bigskip

\noindent{\bf Acknowledgements} The author is grateful to  Nanhua Xi  for his consistent encouragement throughout the research.

\medskip

\noindent{\bf Statements and Declarations}  The author declares that he has no conflict of interests with others.

\medskip

\noindent{\bf Data Availability}  Data sharing not applicable to this article as no datasets were generated or analysed during the current study.

\bigskip

\bibliographystyle{amsplain}

\begin{thebibliography}{10}


\bibitem {A}

E. Artin,   \textit{Galois Theory},   2nd edition, Notre Dame Math. Lectures, vol. 2, University of Notre Dame, Notre Dame, 1944.


\bibitem {BT}
A. Borel and J. Tits,  \textit{Homomorphismes "abstraits" de groupes algebriques simples},  Ann. of Math. (2) {97} (1973), 99--571.



\bibitem {CD2}
X. Chen, J. Dong, \textit{Abstract-induced modules for reductive algebraic groups with Frobenius maps},  Int. Math. Res. Not. IMRN {2022}, no. 5, 3308--3348.


\bibitem {D}
J.  Dong, \textit{Certain complex representations of  $SL_2(\overline{\mathbb{F}}_q)$}, J. Algebra {612} (2022), 504--525.

\bibitem {D1}
J.  Dong,   \textit{Some conjectures on the quotients of the tensor products   in the category $\mathscr{X}$}, arXiv:2508.00488, accepted by J. Lie Theory.


\bibitem {ESS}
J. H. Evertse,  H.P. Schlickewei and W.M. Schmidt, \textit{Linear equations in variables which lie in a multiplicative group}, Ann. of Math. (2) {155} (2002), no.3, 807--836.

\bibitem {M}

V. Mazorchuk,  \textit{Lectures  on $\mathfrak{sl}_2(\mathbb{C})$-modules}, Imperial College Press, London, 2010.

\bibitem {Xi}
N. Xi,  \textit{Some infinite dimensional representations of reductive groups with Frobenius maps}, Sci. China Math.  57  (2014), 1109--1120.







\end{thebibliography}

\end{document}